\documentclass[reqno,11pt]{article}

\RequirePackage[OT1]{fontenc}
\RequirePackage{amsthm, amsmath, amssymb, amsfonts, natbib, xspace, graphicx, enumerate,multirow}

\usepackage{color,xr-hyper}

\usepackage{epsfig,amsmath,amssymb,euscript,amsfonts,mathrsfs}
\usepackage{colortbl,color,graphics,graphicx,epsfig,multirow,float}
\usepackage{framed}
\usepackage{dsfont}
\usepackage{bbm}
\usepackage{changepage}
\usepackage{nicefrac}
\usepackage{indentfirst} 
\usepackage{comment}
\usepackage{booktabs}
\usepackage{natbib}
\usepackage{arydshln}

\usepackage[top=1in, bottom=1in, left=1in, right=1in]{geometry}

\allowdisplaybreaks
\newtheorem{theorem}{Theorem}[section]

\newtheorem{assumption}{Assumption}[section]

\newtheorem{lem}{Lemma}[section]

\setlength{\oddsidemargin}{0.1in}
\setlength{\textwidth}{6.5in}
\setlength{\textheight}{9in}
\setlength{\topmargin}{-0.6in}


\theoremstyle{plain}

\theoremstyle{remark}

\newtheoremstyle{assm} 
    {\topsep}                    
    {\topsep}                    
    {}                               
    {}                           
    {\scshape}                   
    {.}                          
    {.5em}                       
    {}  
\theoremstyle{assm}

\theoremstyle{definition}

\newcommand{\ignore}[1]{}

\begin{document}

\title{\vspace*{-2cm} Inference for Multiple Change-points in Linear and Non-linear Time Series Models}

\author{
Wai Leong Ng\thanks{Department of Statistics, The Chinese University of Hong Kong, Shatin, N.T., Hong Kong. Email: {\ttfamily s1009613922@sta.cuhk.edu.hk}}\\
The Chinese University of Hong Kong
\and
Shenyi Pan\thanks{Department of Statistics, The University of British Columbia, Vancouver, B.C., Canada. Email: {\ttfamily shenyi.pan@stat.ubc.ca}} \\
The University of British Columbia
\and
Chun Yip Yau\thanks{Department of Statistics, The Chinese University of Hong Kong, Shatin, N.T., Hong Kong.  Email: {\ttfamily cyyau@sta.cuhk.edu.hk}. 
Supported in part by HKSAR-RGC Grants CUHK405012, 405113 and 14601015} \\
The Chinese University of Hong Kong
\and 
}
\maketitle

\begin{abstract}
In this paper we develop a generalized likelihood ratio scan method (GLRSM) 
for multiple change-points inference in piecewise stationary time series, which estimates the number and positions of change-points and provides a confidence interval for each change-point.
The computational complexity of using GLRSM for multiple change-points detection is as low as $O(n(\log n)^3)$ for a series of length $n$. Consistency of the estimated numbers and positions of the change-points is established. 
Extensive simulation studies are provided to demonstrate the effectiveness of the proposed methodology under different scenarios. 
	
	\bigskip
	\noindent\textbf{Keywords:} confidence interval; likelihood ratio; piecewise stationary time series models; scan statistics; structural break; structural change.
\end{abstract}

\section{Introduction}\label{sec:intro}

Change-point detection has been recognized as an important issue in econometrics, biology, and engineering for decades. 
Extensive literature has explored the detection of change-points in time series models. \cite{picard1985testing} first studied the maximum likelihood estimator (MLE) of change-points in autoregressive (AR) models. \cite{bai1998testing} extends Picard's method for change-points in multivariate AR and cointegrating models. 
Recently, multiple change-points problems have attracted growing attention. For example, 
\cite{ombao2001automatic} divides a time series into dyadic segments by minimizing an objective function and \cite{braun2000multiple} employs quasi-likelihood to detect multiple change-points in DNA sequences. 
Depending on whether the entire data set is considered as a whole or each data point is processed sequentially, change-point detection problems can be classified as off-line or on-line respectively. In this paper we focus on off-line change-point detection. 

Under the off-line setting, when the number of change-points is unknown, a common approach is to search for the set of change-points that minimizes certain objective functions, for example, the least-squares criterion for a change in the mean of a series (see \cite{yao1989least}, \cite{lavielle2000least}), 
and minimum description length criterion for changes in model parameters (\cite{davis2006structural}). 
However, such optimization procedures usually come with high computational costs because the number of possible combinations of change-points grows exponentially with sample size. 
Some attempts to reduce the computational burden include the genetic algorithm in \cite{davis2006structural, davis2008break} and the pruned exact linear time (PELT) method in \cite{killick2012optimal}. 
Nevertheless, the genetic algorithm involves many tuning parameters, while the computational cost of the PELT is close to  $O(n^2)$ if the number of change-points does not increase linearly with the length of the time series $n$. \cite{yau2015inference} proposed the likelihood ratio scan method (LRSM) to identify multiple change-points for piecewise stationary AR models. The computation can be performed with order $O(n\log n)$, which is lower than the standard order of $O(n^2)$.

In practice, merely reporting the positions of the structural changes in the series does not provide complete information. A more informative description of change-points could be provided by confidence intervals (CI). To obtain the CIs, the asymptotic distribution of the change-point estimator is required. 
There is an extensive literature concerning limiting distribution of change-point estimators for independence sequences. \cite{hinkley1970inference} investigated maximum likelihood estimator of change-points in a sequence of i.i.d. random variables and proved that the estimated change-point converges in distribution to the location of the maxima of a double-sided random walk. \cite{bhattacharya1976minimum} derived the limiting distribution of change-point estimators for independent errors under local changes. \cite{yao1987approximating} showed that Hinkley's (1970) limiting distribution can be approximated by distribution which can be analytically tackled. \cite{dumbgen1991asymptotic} investigated asymptotic behavior of non-parametric change-point estimators. \cite{bai1995least} studied a structural-changed
regression model and showed that the estimated change-point converges
in distribution to the location of the maxima of a double-sided random
walk. For a survey of various results, see \cite{csorgo1997limit} and \cite{antoch1999estimators}. 
For the time series case, \cite{antoch1997effect} derived the asymptotic distribution of the change-point estimator 
for linear process with a change in mean.  
Recently, the asymptotic distribution of maximum likelihood estimator for a change-point in the parameters of time series models 
was established by \cite{ling2016estimation}, showing that the estimated change-point converges weakly to the location of the maxima of a double-sided random walk. 

In this paper, we propose a generalized LRSM (GLRSM) for multiple change-point inference 
 in general time series models. Performing GLRSM involves three steps: first, a likelihood ratio scan statistic is used to obtain a possibly overestimated set of change-points; second, a model selection procedure is employed to give a set of consistent change-point estimates and, finally, a confidence interval is constructed for each estimated change-point. 
The GLRSM procedure thus provides a computationally efficient and automated procedure for change-point inference in time series. 

This paper is organized as follows. In Section \ref{sec:background}, we review the basic settings of the change-point problem. In Section \ref{sec:cpinference}, we develop the GLRSM on general piecewise stationary time series based on the confidence interval construction procedure. Simulation studies are reported in Section \ref{sec:simulation}. 
We conclude in Section \ref{sec:conclusion}. Technical proofs of the theorems and lemmas are provided in Section \ref{sec:proof}.

\section{Basic settings and assumptions}\label{sec:background}

In this section, we first review the basic settings of the change-point problem and  
the asymptotic distribution of the change-point estimator in \cite{ling2016estimation}. 

First we consider single change-point estimation in piecewise stationary processes. Assume that the time series $\{X_t: t=1,2,\dots\}$ is 
$\mathcal{F}_t$-measurable, strictly stationary, ergodic, and generated by
\begin{equation} \label{g-model}
X_t=g(\boldsymbol{\theta},\boldsymbol{X}_{t-1},\epsilon_t)\,, 
\end{equation}
where $\mathcal{F}_t$ is the $\sigma$-field generated by $\{\epsilon_t,\epsilon_{t-1},\dots\}$, $\boldsymbol{X}_t=(X_t,\dots,X_{t-q})$ or $\boldsymbol{X}_t=(X_t,X_{t-1},\dots)$, $\boldsymbol{\theta}$ is an unknown $p\times 1$ parameter vector, and $\{\epsilon_t\}$ are independently and identically distributed (i.i.d.) innovations. The structure of the time series $\{X_t\}$ is 
characterized by a measurable function $g$ and the parameter $\boldsymbol{\theta}$. 
We assume that the parameter space $\boldsymbol{\Theta}$ is a bounded compact subset of $\mathbb{R}^p$ and 
$g$ is continuous with respect to $\boldsymbol{\theta}$. 

We denote model \eqref{g-model} by $M(\boldsymbol{\theta}_0)$ when the true parameter is $\boldsymbol{\theta}=\boldsymbol{\theta}_0$. Let $\{X_1,\dots,X_n\}$ be a random sample realization consists of two independent processes 
\begin{equation}\label{eq:cp.model}
\{X_1,\dots,X_{\tau_1^0}\}\in M(\boldsymbol{\theta}_1^0) \textup{ and } \{X_{\tau_1^0+1},\dots,X_n\}\in M(\boldsymbol{\theta}_2^0)\,,
\end{equation}
where $\tau_1^0\in\{1,2,\dots,n-1\}$ is the change-point satisfying $\delta n<\tau_1^0<(1-\delta)n$ for some small $\delta>0$, and
$\boldsymbol{\theta}_1^0$, $\boldsymbol{\theta}_2^0\in\boldsymbol{\Theta}$ are unknown parameters in the two segments satisfying 
$\boldsymbol{\theta}_1^0\neq \boldsymbol{\theta}_2^0$.

Define the conditional likelihood function  $l_t(\boldsymbol{\theta})\equiv\log f_{\boldsymbol{\theta}}(X_t|\boldsymbol{X}_{t-1})$, where 
$f_{\boldsymbol{\theta}}$ is the conditional density of $X_t$ given previous observations.  
The likelihood for the change-point model \eqref{eq:cp.model} is 
\begin{eqnarray*}
L_n(\tau,\boldsymbol{\theta}_1,\boldsymbol{\theta}_2)=L_{1n}(\tau,\boldsymbol{\theta}_1)+L_{2n}(\tau,\boldsymbol{\theta}_2)\,,
\end{eqnarray*}
where $L_{1n}(\tau,\boldsymbol{\theta}_1)=\sum_{t=1}^{\tau}l_t(\boldsymbol{\theta}_1)$ and $L_{2n}(\tau,\boldsymbol{\theta}_2)=\sum_{t=\tau+1}^{n}l_t(\boldsymbol{\theta}_2)$ are the log-conditional likelihood functions for the first and second segment, respectively.  For a given $\tau$, let $\hat{\boldsymbol{\theta}}_{1n}(\tau)$ and $\hat{\boldsymbol{\theta}}_{2n}(\tau)$ be the maximizer of $L_{1n}(\tau,\boldsymbol{\theta}_1)$ and $L_{2n}(\tau,\boldsymbol{\theta}_2)$ on $\Theta$, respectively. The change-point $\tau$ is estimated by 
\begin{equation}\label{eq:cp.est}
\hat{\tau}=\mathop{\arg\max}_{1\leq\tau\leq n}L_n[\tau,\hat{\boldsymbol{\theta}}_{1n}(\tau),\hat{\boldsymbol{\theta}}_{2n}(\tau)]\,.
\end{equation}
The parameter estimates for the two segments are given by $\hat{\boldsymbol{\theta}}_{1}=\hat{\boldsymbol{\theta}}_{1n}(\hat{\tau})$ 
and $\hat{\boldsymbol{\theta}}_{2}=\hat{\boldsymbol{\theta}}_{2n}(\hat{\tau})$, respectively. Consistency of $\hat{\tau}$, 
$\hat{\boldsymbol{\theta}}_{1}$ and $\hat{\boldsymbol{\theta}}_{2}$ are established in \cite{ling2016estimation}.

\subsection{Asymptotic distribution of change-point estimators}\label{sec:cp.dist}

For the change-point model \eqref{eq:cp.model}, define the double-sided random walk
\begin{equation} \label{Wjdef}
W_{\tau}=\begin{cases}
\sum_{t=1}^{\tau}(l_t(\boldsymbol{\theta}_1^0)-l_t(\boldsymbol{\theta}_2^0))\,, &\tau>0\,,\\
0\,, &\tau=0\,,\\
\sum_{t=\tau}^{-1}(l_t(\boldsymbol{\theta}_2^0)-l_t(\boldsymbol{\theta}_1^0))\,, &\tau<0\,.
\end{cases}
\end{equation}
where $X_t \in M(\boldsymbol{\theta}_2^0)$ when $\tau>0$ and $X_t\in M(\boldsymbol{\theta}_1^0)$ when $\tau<0$. According to Theorem 2.2 (b) in \cite{ling2016estimation}, for fixed $\boldsymbol{\theta}_1^0$ and $\boldsymbol{\theta}_2^0$, we have that
\begin{equation}\label{tau.weak.con}
\hat{\tau}_1-\tau_1^0\overset{d}{\to}\mathop{\arg\max}_{\tau \in \mathbb{Z}}W_{\tau}\,.
\end{equation}

Note that the limit $W_{\tau}$ depends on the unknown parameters $\boldsymbol{\theta}_1^0$ and $\boldsymbol{\theta}_2^0$ and closed form expression for the 
distribution function is not available. Therefore, it is difficult to use \eqref{tau.weak.con} to construct confidence intervals in practice. 
Theorem 3.1 in \cite{ling2016estimation} derived an approximation for $W_{\tau}$ when the parameter change was small. Specifically, if $\boldsymbol{\theta}_1^0-\boldsymbol{\theta}_2^0 = O(1/\sqrt{n})$, then for any fixed $M$,
\begin{equation} \label{lingCI}
(\hat{d}'\hat{\Sigma} \hat{d})^2(\hat{d}'\hat{\Omega}\hat{d})^{-1} \left(\mathop{\arg\max}_{r \in [-M,M]}W_{\lfloor \hat{m}r \rfloor}\right)\overset{d}{\to}\mathop{\arg\max}_{r\in[-M,M]} \left\{B(r)-\frac{1}{2}|r| \right\} \,,
\end{equation}
where $B(r)$ is the standard Brownian motion in $\mathbb{R}$, $\hat{d}=\hat{\boldsymbol{\theta}}_1-\hat{\boldsymbol{\theta}}_2$, $\hat{\Sigma}=\frac{1}{2M}\sum_{t=-M}^{M}\frac{\partial^2l_t(\boldsymbol{\theta})}{\partial\boldsymbol{\theta}\partial\boldsymbol{\theta}'}|_{\hat{\boldsymbol{\theta}}_2}$, $\hat{\Omega}=\frac{1}{2M}\sum_{t=-M}^{M}(D_t(\hat{\boldsymbol{\theta}}_2)-\bar{D})(D_t(\hat{\boldsymbol{\theta}}_2)-\bar{D})'$, where $D_t(\boldsymbol{\theta})=\frac{\partial l_t(\boldsymbol{\theta})}{\partial\boldsymbol{\theta}}$ and $\bar{D}=\sum_{-M}^{M}\frac{D_t(\hat{\boldsymbol{\theta}}_2)}{2M}$, $\hat{m}=(\hat{d}'\hat{\Sigma} \hat{d})^{-2}(\hat{d}'\hat{\Omega}\hat{d})$ and $\lfloor a \rfloor$ is the largest integer not greater than $a$. 
Using \eqref{lingCI} for a sufficiently large $M$, \cite{ling2016estimation} suggests an approximate $100(1-\alpha)\%$ CI given by
\begin{equation}\label{ling14.CI}
CI=\big[\hat{\tau}_1-[\Delta F_{\alpha/2}]-1,\hat{\tau}_1+[\Delta F_{\alpha/2}]+1\big]\,,
\end{equation}
where $\Delta=(\hat{d}'\hat{\Omega}\hat{d})(\hat{d}'\hat{\Sigma} \hat{d})^{-2}$ and $F_{\alpha/2}$ is the $\frac{\alpha}{2}$-th quantile of the distribution $\mathop{\arg\max}_{r\in\mathbb{R}} \{B(r)-\frac{1}{2}|r|\}$.

\section{Estimation of multiple change-points using the Likelihood Ratio Scan Method}\label{sec:cpinference}

In this section, we propose an automated procedure for estimating and constructing confidence intervals for multiple change-points. This procedure generalizes the 
Likelihood Ratio Scan Method (LSRM) in \cite{yau2015inference} from piecewise stationary AR models to general time series models. 
First, we begin with some basic settings and assumptions.

\subsection{Basic settings and assumptions}

We assume that the observations $\{X_t\}_{t=1,\dots,n}$ can be partitioned into $m+1$ stationary processes. 
For $j=1,\dots,m$, the $j$-th segment $\mathbf{X}_{j}=\{X_{\tau_{j-1}+1},\dots,X_{\tau_j}\}$ follows model $M(\boldsymbol{\theta}_{j})$, 
and $\tau_j$ is the $j$-th change-point where the $j$-th segment of the process changes to the $(j+1)$-th segment. 
Set $\tau_0\triangleq 0$ and $\tau_{m+1}\triangleq n$. 
To develop asymptotic theory in change-point analysis, the length of each segment must increase as the sample size increases. Let $\mathcal{J}=(\tau_1,\dots,\tau_m)$ be the set of change-points. 
Define $\lambda_j$ to be the relative position of the $j$-th change-point satisfying $\tau_j=[\lambda_j n]$, $j=0,\dots,m+1$. 
We assume that $\min_{j=0,\dots,m}(\lambda_{j+1}-\lambda_j)>\epsilon_\lambda$ for some $\epsilon_\lambda>0$. 

\subsection{Multiple change-points detection using scan statistics}
In this subsection we state the three steps of the generalized LRSM (GLRSM) method for multiple change-points inference. 
Discussions and asymptotic properties of GLRSM are given in Section \ref{sec:asy.prop}.

\begin{itemize}
\item[]{\bf \large First step: Obtain potential change-points using scan statistics}  \\
For $t=h,\ldots,n-h$, define the scanning window at $t$ and the corresponding observations as
\begin{equation*}
W_t(h)=\{t-h+1,\dots,t+h\} \textup{ \ \ and \ \ } X_{W_t(h)}=(X_{t-h+1},\dots,X_{t+h})\,,
\end{equation*}
respectively, where $h$ is called the window radius. To establish asymptotic theory, we assume that $h=h(n)$ depends on the sample size $n$. 
Define the likelihood ratio scan statistic for the scanning window $W_t(h)$ by
\begin{equation*}
S_h(t)=\frac{1}{h}L_{1h}(t,\hat{\boldsymbol{\theta}}_1)+\frac{1}{h}L_{2h}(t,\hat{\boldsymbol{\theta}}_2)-\frac{1}{h}L_{\cdot h}(t,\hat{\boldsymbol{\theta}})\,,
\end{equation*}
where $L_{1h}(t,\hat{\boldsymbol{\theta}}_1)$, $L_{2h}(t,\hat{\boldsymbol{\theta}}_2)$, and $L_{\cdot h}(t,\hat{\boldsymbol{\theta}})$ are the quasi-likelihoods 
formed by the observations $\{X_s\}_{s=t-h+1,\dots,t}$, $\{X_s\}_{s=t+1,\dots,t+h}$, and $\{X_s\}_{W_h(t)}$, evaluated at the maximum likelihood estimators (MLE) $\hat{\boldsymbol{\theta}}_1$, $\hat{\boldsymbol{\theta}}_2$, and $\hat{\boldsymbol{\theta}}$, respectively. To be specific, 
given a sample $\boldsymbol{z}=\{z_1,\dots,z_n\}$, the quasi-likelihood is defined as 
\begin{equation*}
\boldsymbol{L}(\boldsymbol{\theta})=\sum_{t=1}^{n}l_t(\boldsymbol{\theta})\equiv\sum_{t=1}^{n}\log f_{\boldsymbol{\theta}}(z_t|z_{t-1},z_{t-2},\ldots )\,,
\end{equation*}
where $f_{\boldsymbol{\theta}}(z_t|z_{t-1},z_{t-2},\dots)$ is the conditional density of $z_t$ given previous observations and $z_s=0$ for $s\leq 0$. 

Next, scan the observed time series by using $S_h(t)$ to yield a sequence of likelihood ratio scan statistics $(S_h(h), S_h(h+1),\dots,S_h(n-h))$. 
By construction, if $t$ is a change-point, then $S_h(t)$ tends to be large. If $h$ is chosen such that $2h<n\epsilon_\lambda$, then at most one change-point exists inside each scanning window. Therefore, we can obtain a set of potential change-points from the local maximizers of $S_h(\cdot)$. 
Define the local change-point estimates as follows:
\begin{equation*}
\hat{\mathcal{J}}^{(1)}=\bigg\{m\in\{h,h+1,\dots,n-h\}:S_h(m)=\max_{t\in(m-h,m+h]}S_h(t)\bigg\}\,,
\end{equation*}
where $S_h(t)\triangleq 0$ for $t<h$ and $t>n-h$. That is, $m$ is a local change-point estimator if $S_h(m)$ is the maximum over the window $[m-h+1,m+h]$ centering at $m$. We denote the number of elements in $\hat{\mathcal{J}}^{(1)}$ by $\hat{m}^{(1)}$. 
To further improve the computation efficiency, in practice we may restrict the size of $\hat{\mathcal{J}}^{(1)}$ by keeping 
a pre-specified number of change-points with the greatest $S_{h}(m)$; see \cite{yau2015inference}.

\item[]{\bf \large Second step: Consistent estimation by model selection approach} \\ 

The set of potential change-points $\hat{\mathcal{J}}^{(1)}$ obtained from the first step usually overestimates the true set of change-points. 
To detect the true change-points, we select the best subset from $\hat{\mathcal{J}}^{(1)}$ based on certain information criteria. 
We adopt the minimum description length (MDL) criterion (see \cite{davis2006structural, davis2008break}) which is found to give promising 
performance in a number of empirical studies. Given a set of change-points $\mathcal{J}=(\tau_1,\ldots,\tau_m)$, the MDL criterion is defined as
\begin{equation*}
\textup{MDL}(m,\mathcal{J})=\log(m)+(m+1)\log(n)+\sum_{j=1}^{m+1}\sum_{k=1}^{c_j}\log(\zeta_{j,k})+\sum_{j=1}^{m+1}\frac{d_j}{2}\log(n_j)
-\sum_{j=1}^{m+1}L_j(\hat{\boldsymbol{\theta}}_j;\mathbf{X}_{j})\,,
\end{equation*}
where $L_j(\hat{\boldsymbol{\theta}}_j;\mathbf{X}_{j})$ is the quasi-likelihood for the $j$-piece, $n_1,\ldots,n_{m+1}$ are the segment lengths, $d_j$ is the dimension of $\boldsymbol{\theta}_j$, and $\zeta_{j,1},\ldots,\zeta_{j,c_{j}}$ are integer-valued parameters that specify the order of the model for the $j$-th segment.

Given the local change-point estimates $\hat{\mathcal{J}}^{(1)}$, the refined change-points can be estimated by
\begin{equation*}
(\hat{m}^{(2)},\hat{\mathcal{J}}^{(2)})=\mathop{\arg\min}_{m=|\mathcal{J}|,\mathcal{J}\subseteq\hat{\mathcal{J}}^{(1)}} \textup{MDL}(m,\mathcal{J})\,.
\end{equation*}
Note that searching for the best subset in $\hat{\mathcal{J}}^{(1)}$ is much more computationally efficient than 
the traditional approach of optimizing the MDL over all possible change-point positions. 

\item[]{\bf \large Third Step: Final change-point estimates and confidence intervals}  \\
Define the extended local-window and the corresponding observations for the $j$-th estimated change-point $\hat{\tau}_j^{(2)} \in \hat{\mathcal{J}}^{(2)}$ by
\begin{equation*}
E_j(h)=\{\hat{\tau}_j^{(2)}-2h+1,\dots,\hat{\tau}_j^{(2)}+2h\} \textup{ and } X_{E_j(h)}=(X_{\hat{\tau}_j^{(2)}-2h+1},\dots,X_{\hat{\tau}_j^{(2)}+2h})\,,
\end{equation*}
respectively. Note that this construction ensures that each true change-point is within $(\frac{1}{4},\frac{3}{4})$ of each extended local window $E_j(h)$
with probability approaching 1. Let $L_j(\tau,\boldsymbol{\theta}_1,\boldsymbol{\theta}_2)=\sum_{t=\hat{\tau}_j^{(2)}-2h+1}^{\tau}l_t(\boldsymbol{\theta}_1)+\sum_{t=\tau+1}^{\hat{\tau}_j^{(2)}+2h}l_t(\boldsymbol{\theta}_2)$. 
For $j=1,\dots,\hat{m}^{(2)}$, define the final estimate as
	\begin{equation*}
	\hat{\tau}_j^{(3)}=\mathop{\arg\max}_{\tau\in(\hat{\tau}_j^{(2)}-h,\hat{\tau}_j^{(2)}+h]}L_j(\tau,\hat{\boldsymbol{\theta}}_j,\hat{\boldsymbol{\theta}}_{j+1})\,,
	\end{equation*}
	where $\hat{\boldsymbol{\theta}}_j=\hat{\boldsymbol{\theta}}_j(\tau)=\mathop{\arg\max}_{\boldsymbol{\theta}_1}\sum_{t=\hat{\tau}_j^{(2)}-2h+1}^{\tau}l_t(\boldsymbol{\theta}_1)$, and $\hat{\boldsymbol{\theta}}_{j+1}$ is defined analogously.  
Then, apply the procedures specified in Section \ref{sec:cp.dist} to obtain a confidence interval 
around each final change-point estimate $\hat{\tau}_j^{(3)}$. 
\end{itemize}

In the first scanning step, given $h$, the computational complexity for evaluating $S_h(t)$ for each $t$ is of order $O(h)$ since the size of $W_j(h)$ is $2h$. In the second step, minimizing MDL over $\hat{m}^{(1)}$ change-point candidates requires a computational complexity of $O((\hat{m}^{(1)})^2n)$ when using the optimal partition (OP) method of \cite{jackson2005algorithm}. Finally, as the computation is restricted to the extended local windows in the third step, the computational complexity is $O(\hat{m}^{(2)}h^2)$. As both $\hat{m}^{(1)}$ and $\hat{m}^{(2)}$ are finite, the total computational complexity in the complete procedure to find the final change-point estimates is $O(nh+h^2)$. As will be shown in Section \ref{sec:asy.prop}, the condition $h=d (\log n)^3$ is needed for some positive number $d$. When $h$ is as small as the order of $O((\log n)^3)$, the computation for change-point detection can be completed in $O(n (\log n)^3)$ steps. Therefore, the complete three-step GLRSM requires the computational complexity of $O\big(n (\log n)^3\big)$, which is lower than the order of $O(n^2)$ using a dynamic programing algorithm.

The GLRSM procedure includes one tuning parameter, the window radius $h$. As will be shown in Section \ref{sec:asy.prop}, it is theoretically crucial to choose an $h>d (\log n)^3$ for Theorem \ref{thm1} to hold, where $d$ is an unknown constant. If $h$ is of an order larger than $O((\log n)^3)$, e.g. $h=d_2(\log n)^4$, then the GLRSM is consistent for any choice of $d_2$.
Based on our empirical studies, it is found that $d_2=1/25$ and $h\geq100$
usually yields favorable results for various models and sample sizes. Thus, we suggest using $\max(100, (\log n)^4/25)$ as a rule-of-thumb choice of $h$. 

\subsection{Asymptotic properties}\label{sec:asy.prop}
In this subsection we investigate the asymptotic properties of the GLRSM procedure. Specifically, we show the consistency
of the estimated number of change-points and the coverage accuracy of the confidence intervals.
We introduce the following assumptions.

\begin{assumption} \label{asp1}
For any two consecutive segments $\mathbf{X}_{j}=\{X_{\tau_{j-1}+1},\dots,X_{\tau_j}\}$ and $\mathbf{X}_{j+1}=\{X_{\tau_{j}+1},\dots,X_{\tau_{j+1}}\}$, the expectation of the conditional likelihood function $E[l_k(\boldsymbol{\theta})]$ has a unique maximizer at $\boldsymbol{\theta}=\boldsymbol{\theta}^0_j$ for $k \in \{\tau_{j-1}+1, \ldots, \tau_{j}\}$ and at $\boldsymbol{\theta}=\boldsymbol{\theta}^0_{j+1}$ for $k \in \{\tau_{j}+1, \ldots, \tau_{j+1}\}$, with $\boldsymbol{\theta}^0_j \neq \boldsymbol{\theta}^0_{j+1}$. Moreover, $E[l_k(\boldsymbol{\theta}^0_{j+1})] < E[l_k(\boldsymbol{\theta}^0_{j})]$ for $k \in \{\tau_{j-1}+1, \ldots, \tau_{j}\}$ and $E[l_k(\boldsymbol{\theta}^0_{j})] < E[l_k(\boldsymbol{\theta}^0_{j+1})]$ for $k \in \{\tau_{j}+1, \ldots, \tau_{j+1}\}$.
\end{assumption}

\begin{assumption} \label{asp2}
	Within any segment, $l_{k}(\boldsymbol\theta)$ is a measurable and continuous function with respect to $\{X_{t}\}$, and is almost surely twice continuously differentiable with respect to $\boldsymbol\theta$.
\end{assumption}

\begin{assumption} \label{asp3}
	Let $Y_{k}({\boldsymbol{\theta}})=l_k({\boldsymbol{\theta}})-\mathbb{E}\left[l_k({\boldsymbol{\theta}})\right]$. For all $\boldsymbol{\theta\in\boldsymbol{\boldsymbol{\Theta}}}$, there exists a $K>0$ such that ${\rm{E}}(e^{\left|Y_{k}({\boldsymbol{\theta}})\right|})\leq K$ 
for all $k \in \mathbb{N}$.
\end{assumption}

\begin{assumption} \label{asp4}
	For all $\boldsymbol{\theta}_j\in\boldsymbol{\Theta}_j$, there exists an integrable function $G(\boldsymbol{X}_t)$ such that $\mathbb{E}(G(\boldsymbol{X}_t))<\infty$ and $|l_t(\boldsymbol{\theta}_j)|\leq G(\boldsymbol{X}_t)$.
\end{assumption}

Theorem \ref{thm1} below asserts that all change-points can be identified in an $h$-neighborhood of $\hat{\mathcal{J}}^{(1)}$. 
\begin{theorem} \label{thm1}
	Let the set of true change-points be $\mathcal{J}_0=(\tau_1^0,\dots,\tau_{m_0}^0)$ and the set of local change-point estimates be $\hat{\mathcal{J}}^{(1)}=\{\hat{\tau}_1^{(1)},\hat{\tau}_2^{(1)},\dots,\hat{\tau}_{\hat{m}^{(1)}}^{(1)}\}$, where $\hat{m}^{(1)}=|\hat{\mathcal{J}}^{(1)}|$. Suppose Assumptions \ref{asp1}-\ref{asp4} hold, $2h<n\epsilon_\lambda$ and $\epsilon_\lambda>c$ for some $c>0$, then there exists some $d>0$ such that for $h\geq d(\log n)^3$,
	\begin{equation*}
		\mathbb{P}\bigg(\max_{\tau\in\mathcal{J}_0}\min_{k=1,\dots,\hat{m}^{(1)}}|\tau-\hat{\tau}_k^{(1)}|<h\bigg)\to 1\,.
	\end{equation*}
\end{theorem}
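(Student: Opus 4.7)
The plan is to show that for each true change-point $\tau_j^0$, the scan function $t \mapsto S_h(t)$ must possess a local maximum (in the sense of the definition of $\hat{\mathcal{J}}^{(1)}$) at some point of the interval $(\tau_j^0-h,\tau_j^0+h]$. The argument rests on two complementary facts: (i) at a true change-point, $S_h(\tau_j^0)$ is bounded below by a positive constant determined by the parameter separation, and (ii) at any location whose $2h$-window lies entirely inside a single stationary segment, $S_h(t)$ is uniformly small. Once both hold, a simple comparison argument inside a window of width $2h$ around the local maximiser delivers the conclusion.

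First I would establish the pointwise lower bound at $\tau_j^0$. Because the two halves of $W_{\tau_j^0}(h)$ are generated by distinct parameters $\boldsymbol{\theta}_j^0\neq\boldsymbol{\theta}_{j+1}^0$, Assumption~\ref{asp1} implies that $h^{-1}\mathbb{E}\bigl[L_{1h}(\tau_j^0,\boldsymbol{\theta}_j^0)+L_{2h}(\tau_j^0,\boldsymbol{\theta}_{j+1}^0)-L_{\cdot h}(\tau_j^0,\boldsymbol{\theta}_*)\bigr]$ is strictly positive for every admissible $\boldsymbol{\theta}_*$, since the pooled MLE cannot simultaneously maximise the expected log-likelihood of both segments. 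Combining this with consistency of the segment and pooled MLEs (Assumptions~\ref{asp2}--\ref{asp4}) and a standard quadratic expansion then gives $S_h(\tau_j^0)\geq c>0$ with probability approaching one for some constant $c$ depending only on $\boldsymbol{\theta}_j^0,\boldsymbol{\theta}_{j+1}^0$.

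The harder step is the uniform upper bound: I need $\max_{t\in T_0}|S_h(t)|\to 0$ in probability, where $T_0$ is the set of window centres $t$ whose $2h$-window stays inside one segment. Pointwise, both half-MLEs and the pooled MLE are consistent for the same true parameter, so a Taylor expansion yields $S_h(t)=O_p(1/h)$. To upgrade to a uniform bound over $|T_0|\leq n$ centres, I would invoke Assumption~\ref{asp3}, which provides the exponential moment needed for a Bernstein/Cramér-type deviation inequality on the centred sums $Y_k(\boldsymbol{\theta})=l_k(\boldsymbol{\theta})-\mathbb{E}[l_k(\boldsymbol{\theta})]$; combining this with a finite $\epsilon_n$-net of $\boldsymbol{\Theta}$ (together with the continuity of Assumption~\ref{asp2} and the envelope of Assumption~\ref{asp4}) shows that deviations of order $\sqrt{(\log n)^3/h}$ have sub-polynomially small probability at each $t$. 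A union bound over the $O(n)$ admissible centres then forces the overall exceptional probability to vanish, provided $h\geq d(\log n)^3$ for $d$ sufficiently large. This is where the $(\log n)^3$ rate in the hypothesis enters, and is the main technical obstacle, because the weakly dependent increments generated by model~\eqref{g-model} prevent direct use of i.i.d. Bernstein bounds and require careful control of the conditional structure.

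Finally I would combine (i) and (ii) as follows. Fix a true change-point $\tau_j^0$ and let $m^*=\arg\max_{t\in(\tau_j^0-h,\,\tau_j^0+h]}S_h(t)$. By (i), $S_h(m^*)\geq S_h(\tau_j^0)\geq c$. The separation condition $2h<n\epsilon_\lambda$ ensures that the enlarged window $(m^*-h,m^*+h]\subseteq(\tau_j^0-2h,\tau_j^0+2h]$ contains no true change-point other than $\tau_j^0$, so every $t\in(m^*-h,m^*+h]\setminus(\tau_j^0-h,\tau_j^0+h]$ has its $W_t(h)$ entirely inside a single segment and hence belongs to $T_0$. By (ii), $\max S_h(t)=o_p(1)$ on this residual set, so $S_h(m^*)>S_h(t)$ there with probability tending to one. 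Consequently $m^*$ satisfies $S_h(m^*)=\max_{t\in(m^*-h,m^*+h]}S_h(t)$, i.e.\ $m^*\in\hat{\mathcal{J}}^{(1)}$, while by construction $|m^*-\tau_j^0|<h$. Intersecting these high-probability events over the finitely many true change-points $\tau_1^0,\dots,\tau_{m_0}^0$ then yields $\mathbb{P}\bigl(\max_{\tau\in\mathcal{J}_0}\min_k|\tau-\hat{\tau}_k^{(1)}|<h\bigr)\to 1$, as required.
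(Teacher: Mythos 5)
Your proposal follows essentially the same route as the paper: a positive limit for $S_h$ at each true change-point (the paper's Lemma \ref{lem1} and Lemma \ref{lem3}), an exponential deviation bound for $S_h(t)$ on windows inside a single segment combined with a union bound over $O(n)$ centres (Lemma \ref{lem2}), and the comparison $\min_{t\in\mathcal{J}_0}S_h(t)>\max_{t\in\mathcal{E}}S_h(t)$ to force a local maximiser within $h$ of each $\tau_j^0$. The one ingredient you flag but do not supply --- the deviation inequality for the weakly dependent, martingale-difference increments --- is exactly where the paper invokes Theorem 3.2 of \cite{lesigne2001large}, whose exponent of order $h^{1/3}\epsilon^{2/3}$ is what dictates the $(\log n)^3$ window size.
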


Note that as the minimum distance between change-points is $n\epsilon_\lambda=O(n)$, the true number of change-points $m_0$ is finite. However, there is no guarantee that $\hat{m}^{(1)}$ equals the number of change-points. That is, the number of change-points may be overestimated. Nevertheless, the following theorem shows that the MDL model selection approach 
yields the consistency of the number and positions of the change-points.
\begin{theorem} \label{thm2}
	Under the setting in Theorem \ref{thm1} with $\epsilon_\lambda>c$ for some $c>0$, we have $\hat{m}^{(2)}\overset{p}{\to}m_0$. In addition, given that $\hat{m}^{(2)}=m_0$, we have
	\begin{equation*}
	\mathbb{P}\left(\max_{j=1,\dots,m_0}|\hat{\tau}^{(2)}-\tau_j^0|<h\right)\to 1\,.
	\end{equation*}
\end{theorem}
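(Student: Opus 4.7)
\emph{Proof plan.}
The plan is to compare MDL values across subsets of $\hat{\mathcal{J}}^{(1)}$ via a natural ``oracle'' subset. By Theorem \ref{thm1}, with probability tending to one every true change-point $\tau_j^0$ has an element of $\hat{\mathcal{J}}^{(1)}$ within distance $h$; let $\mathcal{J}^{*}$ be the $m_0$-element subset formed by picking, for each $j$, the closest such candidate. I would first show that any subset of cardinality $m \neq m_0$ has strictly larger MDL than $\text{MDL}(m_0, \mathcal{J}^{*})$ with probability tending to one, which gives $\hat{m}^{(2)} \overset{p}{\to} m_0$, and then deduce the distance bound by showing that $\hat{\mathcal{J}}^{(2)} = \mathcal{J}^{*}$ on the event $\{\hat{m}^{(2)} = m_0\}$.

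For the underestimation case ($m < m_0$), any such $\mathcal{J} \subseteq \hat{\mathcal{J}}^{(1)}$ must leave some $\tau_j^0$ separated from every element of $\mathcal{J}$ by more than $h$, and since $\epsilon_\lambda > c$ the MDL-segment containing $\tau_j^0$ has $\Omega(n)$ observations on each side of it. On such a straddling segment Assumption \ref{asp1} ensures that no single $\boldsymbol{\theta}$ simultaneously maximizes both halves' expected log-likelihoods, and a uniform law of large numbers obtained from Assumptions \ref{asp2}--\ref{asp4} produces a log-likelihood deficit of order $\Omega(n)$. Since the MDL penalty terms differ only by $O(\log n)$ across configurations, this deficit forces $\text{MDL}(m,\mathcal{J}) - \text{MDL}(m_0,\mathcal{J}^{*}) \to +\infty$ in probability.

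For the overestimation case ($m > m_0$), each spurious change-point subdivides an otherwise stationary MDL-segment, and the resulting improvement in the quasi-log-likelihood is a generalized likelihood ratio of at most $O_p(\log\log n)$ by a law-of-the-iterated-logarithm bound on the score partial sums $\sum Y_k(\boldsymbol{\theta})$, whose sub-exponential tails are controlled by Assumption \ref{asp3}. The MDL penalty meanwhile grows by $\log n + \tfrac{d_j}{2}\log n_j + O(1) = \Omega(\log n)$ for each added change-point, so the net MDL difference is positive with probability tending to one. The main obstacle here is producing a tight enough stochastic upper bound on the spurious likelihood gain; an iterated-logarithm bound or a Hajek--Renyi inequality for score partial sums suffices and exploits the requirement that each MDL-segment has length of order at least $h = \Omega((\log n)^3)$.

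Combining the two cases gives $\hat{m}^{(2)} \overset{p}{\to} m_0$. For the localization conclusion, on the event $\{\hat{m}^{(2)} = m_0\}$, if $\hat{\mathcal{J}}^{(2)} \neq \mathcal{J}^{*}$ then some true $\tau_j^0$ has no representative within $h$ in $\hat{\mathcal{J}}^{(2)}$, which reactivates the $\Omega(n)$ log-likelihood deficit argument of the second paragraph and contradicts MDL-optimality of $\hat{\mathcal{J}}^{(2)}$. Hence $\mathbb{P}(\hat{\mathcal{J}}^{(2)} = \mathcal{J}^{*}) \to 1$, and by the defining property of $\mathcal{J}^{*}$ every element of $\hat{\mathcal{J}}^{(2)}$ lies within $h$ of the corresponding true $\tau_j^0$, establishing the stated probability bound.
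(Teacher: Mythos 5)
Your plan follows essentially the same route as the paper's, which in fact omits the argument and defers to the proof of Theorem 2 in \cite{yau2015inference}: an oracle subset $\mathcal{J}^{*}$ supplied by Theorem \ref{thm1}, an underestimation case driven by a likelihood deficit on a segment straddling an unrepresented true change-point, and an overestimation case in which the spurious likelihood gain is $O_p(\log\log n)$ and is beaten by the $\Omega(\log n)$ MDL penalty. Two points need correcting, though neither is fatal. First, in the underestimation case the segment straddling the missed $\tau_j^0$ is delimited by elements of $\mathcal{J}\subseteq\hat{\mathcal{J}}^{(1)}$, and these may be spurious candidates sitting just beyond distance $h$ from $\tau_j^0$; you are therefore only guaranteed more than $h$ observations of each regime inside that segment, not $\Omega(n)$ of each. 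The guaranteed deficit is $\Omega(h)$, not $\Omega(n)$, and it is exactly here that the hypothesis $h\ge d(\log n)^3\gg\log n$ is needed so that the deficit dominates the $O(\log n)$ penalty differences; your $\Omega(n)$ claim hides the role of this condition and would wrongly suggest the argument survives with much smaller $h$. Second, in the final paragraph the implication ``$\hat{\mathcal{J}}^{(2)}\neq\mathcal{J}^{*}$ implies some $\tau_j^0$ has no representative within $h$'' is not valid: two candidates in $\hat{\mathcal{J}}^{(1)}$ may both lie within $h$ of the same true change-point, so $\mathbb{P}(\hat{\mathcal{J}}^{(2)}=\mathcal{J}^{*})\to 1$ need not hold. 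What you actually need, and what your deficit argument does deliver, is the weaker statement that with probability tending to one every $\tau_j^0$ has \emph{some} element of $\hat{\mathcal{J}}^{(2)}$ within distance $h$; combined with $\hat{m}^{(2)}=m_0$ and $2h<n\epsilon_\lambda$ this pairs estimates to true change-points bijectively and yields the stated bound. (You also implicitly take $\hat{m}^{(1)}=O_p(1)$ so that the union over subsets of $\hat{\mathcal{J}}^{(1)}$ is harmless; the paper makes the same implicit assumption.)
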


Since $h>d(\log n)^3\to\infty$, Theorem \ref{thm2} implies that $\max_{j=1,\dots,m_0}|\hat{\tau}_j^{(2)}-\tau_j^0|=O_p(h)$, which is not optimal compared with the typical rate of $O_p(1)$. Nevertheless, the interval $[\hat{\tau}_j-h+1,\hat{\tau}_j+h]$ covers the true change-point $\tau_j^0$ with probability approaching 1.  This allows the extended local-window to yield consistent final estimates and confidence intervals.
\begin{theorem} \label{thm3}
	Assume that the conditions of Theorem \ref{thm2} hold, $3h<n\epsilon_\lambda$, and Assumption \ref{asp4} holds. Then, we have
	\begin{equation*}
	\hat{\tau}_j^{(3)}-\tau_j^0\overset{d}{\to}\mathop{\arg\max}_{\tau \in \mathbb{Z}} W_{j,\tau}\,,
	\end{equation*}
	where $W_{j,\tau}$ is a double-sided random walk defined as
	\begin{equation*}
		W_{j,\tau}=\begin{cases}
		\sum_{t=\tau_j^0+1}^{\tau_j^0+\tau}(l_t(\boldsymbol{\theta}_j^0)-l_t(\boldsymbol{\theta}_{j+1}^0))\,, &\tau>0\,,\\
		0\,, & \tau=0\,, \\
		\sum_{t=\tau_j^0+\tau+1}^{\tau_j^0}(l_t(\boldsymbol{\theta}_{j+1}^0)-l_t(\boldsymbol{\theta}_{j}^0))\,, &\tau<0\,.
		\end{cases}
	\end{equation*}
	In particular, $\hat{\tau}_j^{(3)}-\tau_j^0=O_p(1)$. 
\end{theorem}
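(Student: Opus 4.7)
The strategy is to show that, with probability tending to one, the estimator $\hat{\tau}_j^{(3)}$ arises as the maximum likelihood estimator in a single-change-point problem confined to the extended local window $E_j(h)$, so that the limit \eqref{tau.weak.con} imported from Theorem 2.2(b) of \cite{ling2016estimation} applies directly.

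First I would introduce the good event
$$\mathcal{E}_n=\bigcap_{k=1}^{m_0}\big\{|\hat{\tau}_k^{(2)}-\tau_k^0|<h\big\},$$
which has probability tending to one by Theorem \ref{thm2}. On $\mathcal{E}_n$, two structural facts follow for each fixed $j$. First, $\tau_j^0$ lies in $[\hat{\tau}_j^{(2)}-h,\hat{\tau}_j^{(2)}+h]$, i.e., inside the central half of $E_j(h)$, so the window has at least $h$ observations on each side of $\tau_j^0$. Second, no other true change-point lies in $E_j(h)$, since for $k\neq j$,
$$|\hat{\tau}_j^{(2)}-\tau_k^0|\geq|\tau_k^0-\tau_j^0|-|\hat{\tau}_j^{(2)}-\tau_j^0|>n\epsilon_\lambda-h>2h,$$
by the minimum-spacing condition together with the assumption $3h<n\epsilon_\lambda$. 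Therefore, on $\mathcal{E}_n$, the observations in $E_j(h)$ follow a single-change-point model of the form \eqref{eq:cp.model} with true break at $\tau_j^0$ and parameters $\boldsymbol{\theta}_j^0,\boldsymbol{\theta}_{j+1}^0$.

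Next I would note that the search set $(\hat{\tau}_j^{(2)}-h,\hat{\tau}_j^{(2)}+h]$ in the definition of $\hat{\tau}_j^{(3)}$ contains $\tau_j^0$ on $\mathcal{E}_n$, and that every candidate $\tau$ in this set has at least $h\to\infty$ observations on each side within $E_j(h)$. Standard uniform-law-of-large-numbers arguments, exploiting the integrable dominating function in Assumption \ref{asp4} and the measurability/continuity in Assumption \ref{asp2}, then give consistency of the plug-in segment MLEs $\hat{\boldsymbol{\theta}}_j(\tau)$ and $\hat{\boldsymbol{\theta}}_{j+1}(\tau)$ uniformly in $\tau$. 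At this point the setup inside $E_j(h)$ exactly matches the hypotheses of Theorem 2.2(b) of \cite{ling2016estimation}, so \eqref{tau.weak.con} applied to this single-change-point subproblem yields $\hat{\tau}_j^{(3)}-\tau_j^0\overset{d}{\to}\arg\max_{\tau\in\mathbb{Z}}W_{j,\tau}$. The $O_p(1)$ conclusion is then immediate: Assumption \ref{asp1} gives $W_{j,\tau}$ a strictly negative mean drift away from $0$, while the exponential moment in Assumption \ref{asp3} controls the fluctuations, so the argmax is almost surely finite and thus tight.

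The principal obstacle is the random, data-dependent nature of both the window $E_j(h)$ and the search region, since both depend on the preliminary estimator $\hat{\tau}_j^{(2)}$. This is handled by conditioning on $\mathcal{E}_n$ and observing that the limit law $\arg\max_{\tau\in\mathbb{Z}}W_{j,\tau}$ is translation-invariant in the sense that it depends only on $\tau_j^0$ lying in the interior of a window whose half-lengths on both sides diverge, not on the precise location of the window center. A secondary point to verify is that the proof of Theorem 2.2(b) of \cite{ling2016estimation} carries over when the piece lengths grow only at the slow rate $(\log n)^3$ from Theorem \ref{thm1}; inspecting that proof shows it only uses divergence of the segment lengths, and the required integrability and regularity are inherited from Assumptions \ref{asp2}--\ref{asp4}.
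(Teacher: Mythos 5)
Your proposal is correct and follows essentially the same route as the paper: the paper's own proof likewise uses the dominating function of Assumption \ref{asp4} with a uniform law of large numbers for the segment MLEs and then defers to the localization argument of Theorem 3 in \cite{yau2015inference}, which is exactly the reduction you spell out (conditioning on the event that $\hat{\tau}_j^{(2)}$ is within $h$ of $\tau_j^0$, showing $E_j(h)$ contains only that one change-point in its central half, and applying the single-change-point limit \eqref{tau.weak.con} from \cite{ling2016estimation}). Your write-up simply makes explicit the steps the paper outsources by reference.
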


As the minimum distance between change-points is much larger than the window radius $h$, i.e., $n\epsilon_\lambda/h \rightarrow \infty$, the distances between the extended local windows $E_j(h)$s diverge to $\infty$. Under some weak dependence conditions on the processes, the CIs constructed are asymptotic independent. Thus, a Bonferroni-type argument implies that an approximate $1-\alpha$ simultaneous confidence interval for all the $\hat{m}^{(2)}$ change points can be constructed by a collection of $(1-\alpha)^{1/\hat{m}^{(2)}}$ CIs for each of a set of $\hat{m}^{(2)}$ change points.

\section{Simulation studies\label{sec:simulation}}

In this section, we examine the finite sample performance of GLRSM via extensive simulation studies. 
In Sections \ref{appLRSM}, we study the performance of GLRSM on various models. In Section \ref{(GARCHperformance)}, we compare GLRSM to other methods in the literature, including \cite{fryzlewicz2014multiple}, in terms of the performance of change-point detection.

\subsection{Performance of GLRSM\label{appLRSM}}

In this section, we apply GLRSM to multiple change-points problems in different piecewise stationary time series models, including AR, ARMA, and GARCH models. 
Also, the rule-of-thumb choice of $h=\max(100,(\log n)^4/25)$ for the window radius is employed. 

We applied GLRSM on the piecewise stationary processes generated from multiple change-point Models (C) to (G) listed below. 
Realizations of these models are given in Figure \ref{fig:sample}. In each model, $\epsilon_{t}\overset{i.i.d.}\sim N(0,1)$.
The estimation results are summarized in Table \ref{tab:simuresult}.

\begin{figure}[!ht] 
	\centering
	\includegraphics[totalheight=5in]{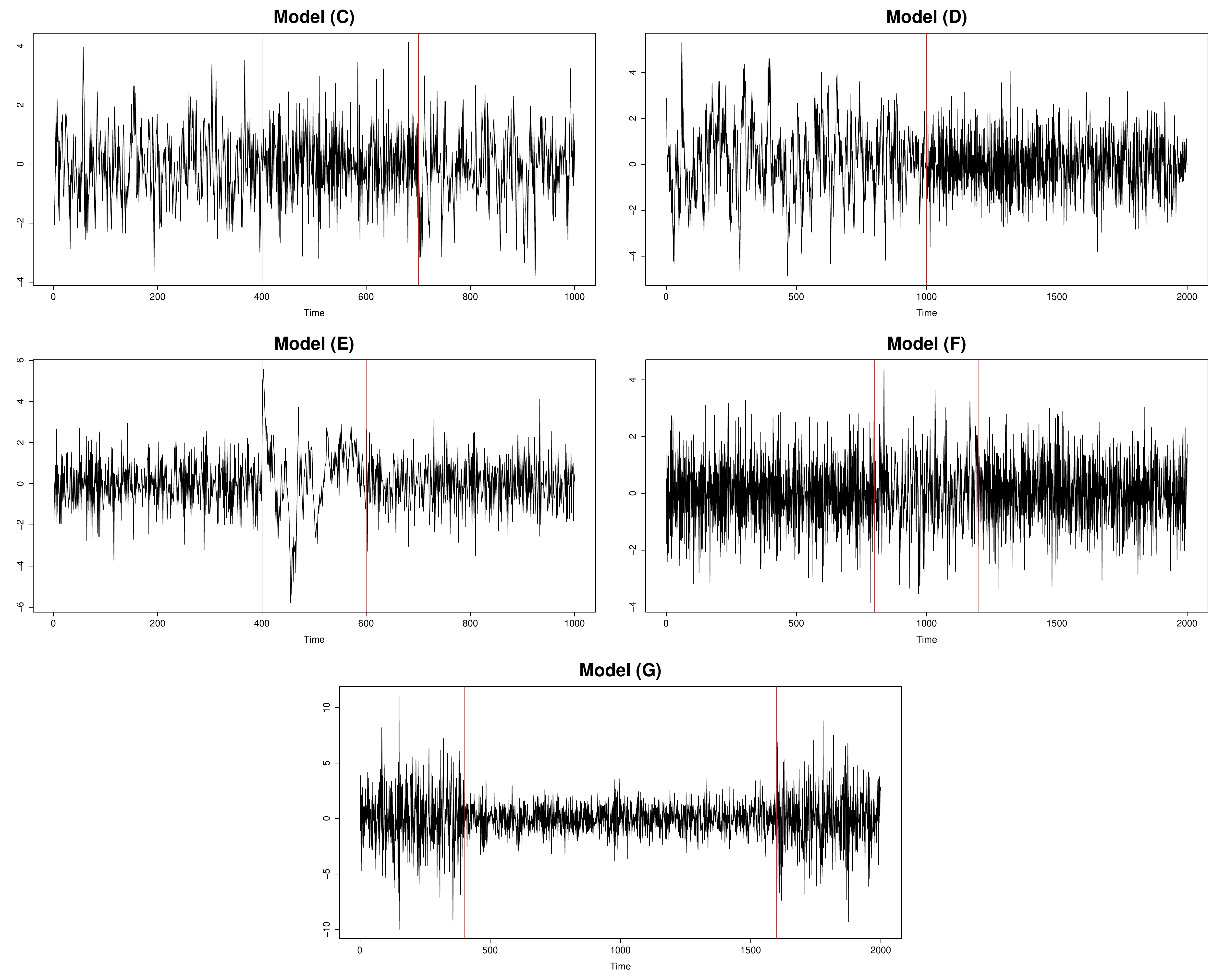}
	\caption{Realizations from Models (C) to (G). Vertical lines represent change-point positions} \label{fig:sample}
\end{figure}

\begin{table}[!ht]
	\scriptsize
	\centering
	\begin{tabular}{*{7}{c}}
		\toprule
		Model  & $\tau^0$ & Median & Mean & 90\% Range & Mean of 90\% CI & Coverage Prob.\\\hdashline
		\multirow{2}*{(C)}
		& 400   & 400 & 400.78 & [391, 411] & [393.62, 407.94] & 83.2\%\\
		& 700   & 700 & 700.09 & [691, 710] & [694.18, 705.99] & 83.1\%\\\midrule
		\multirow{2}*{(D)}
		& 1000   & 1000 &  999.35 & [991, 1007] & [994.68, 1004.02] & 81.7\%\\
		& 1500   & 1500 & 1499.66 & [1487, 1511] & [1490.83, 1508.50] & 84.6\%\\\midrule
		\multirow{2}*{(E)}
		& 400 & 400 & 400.56 & [394, 407] & [397.81, 403.31] & 71.1\%\\
		& 600 & 600 & 599.38 & [593, 605] & [595.37, 603.39] & 85.2\%\\\midrule
		\multirow{2}*{(F)}
		& 800 & 800 & 800.01 & [793, 806] & [795.08, 804.95] & 85.8\%\\
		& 1200 & 1200 & 1199.18 & [1188, 1209] & [1191.55, 1206.81] & 84.7\%\\\midrule
		\multirow{2}*{(G)}
		& 400 & 399 & 398.92 & [391, 406] & [394.76, 403.09] & 78.6\%\\
		& 1600 & 1600 & 1599.72 & [1593, 1609] & [1593.51, 1605.92] & 82.5\%\\\bottomrule
	\end{tabular}
	\caption{\footnotesize True value ($\tau^0$), median, mean and the range of the middle 90\% (90\% Range) of the final estimates, average of the end-points (mean of 90\% CI) and coverage probability (Coverage Prob.) of 90\% confidence intervals for Models (C) to (G). 
The number of replications is 1000.
}
	\label{tab:simuresult}
\end{table}

\noindent\textit{(C). Piecewise stationary AR(1) processes}\\
\begin{equation*}
X_{t}=\begin{cases}
0.4X_{t-1}+\epsilon_t\,,\ &\textup{if}\ 1\leq t\leq 400\,,\\
-0.6X_{t-1}+\epsilon_t\,,\ &\textup{if}\ 401\leq t\leq 700\,,\\
0.5X_{t-1}+\epsilon_t\,,\ &\textup{if}\ 701\leq t\leq 1000\,.
\end{cases}
\end{equation*}

\noindent\textit{(D). Piecewise stationary AR(2) processes}\\
\begin{equation*}
X_{t}=\begin{cases}
0.7X_{t-1}+0.1X_{t-2}+\epsilon_t\,,\ &\textup{if}\ 1\leq t\leq 1000\,,\\
-0.4X_{t-1}\epsilon_t\,,\ &\textup{if}\ 1001\leq t\leq 1500\,,\\
0.5X_{t-1}-0.2X_{t-2}+\epsilon_t\,,\ &\textup{if}\ 1501\leq t\leq 2000 \,.
\end{cases}
\end{equation*}

\noindent\textit{(E). Piecewise stationary ARMA(1,1) processes}\\
\begin{equation*}
	X_{t}=\begin{cases}
		-0.8X_{t-1}+\epsilon_t+0.5\epsilon_{t-1}\,,\ &\textup{if}\ 1\leq t\leq 400\,,\\
		0.9X_{t-1}+\epsilon_t \,, \ &\textup{if}\ 401\leq t\leq 600 \,,\\
		0.1X_{t-1}+\epsilon_t-0.5\epsilon_{t-1} \,,\ &\textup{if}\ 601\leq t\leq 1000 \,.
	\end{cases}
\end{equation*}

\noindent\textit{(F). Piecewise stationary ARMA processes up to order 2}\\
\begin{equation*}
	X_{t}=\begin{cases}
		-0.6X_{t-1}-0.2X_{t-2}+\epsilon_t\,,\ &\textup{if}\ 1\leq t\leq800 \,, \\
		0.4X_{t-1}+\epsilon_t+0.3\epsilon_{t-1}\,,\ &\textup{if}\ 801\leq t\leq1200 \,,\\
		\epsilon_t-0.3\epsilon_{t-1}-0.2\epsilon_{t-2}\,,\ &\textup{if}\ 1201\leq t\leq 2000 \,.
	\end{cases}
\end{equation*}

\noindent\textit{(G). Piecewise stationary GARCH processes}\\
\begin{equation*} 
X_t=\epsilon_t\sigma_t\,, \hspace{1cm} \sigma_t^2=
	\begin{cases}
		3+0.1 X_{t-1}^2+0.5 \sigma_{t-1}^2\,, \ &\textup{if}\ 1\leq t\leq 400\,,\\
		0.5+0.1 X_{t-1}^2+0.5 \sigma_{t-1}^2\,, \ &\textup{if}\ 401\leq t\leq 1600\,,\\
		0.8+0.1 X_{t-1}^2+0.8 \sigma_{t-1}^2\,, \ &\textup{if}\ 1601\leq t\leq 2000\,.
	\end{cases}
\end{equation*}

When performing GLRSM on ARMA($p$,$q$) models such as Model (E) and (F), in consideration of the computation complexity and the robustness of AR models, we  conduct the first two steps using piecewise AR models. After obtaining $\hat{\tau}_j^{(2)}$ from the second step, we use the ARMA models to conduct the third step and compute the final estimate $\hat{\tau}_j^{(3)}$, and construct the corresponding confidence intervals for each change-point estimate. 
Note that the consistency of the change-point estimator does not require that the true models be specified since the asymptotic results in Theorem \ref{thm1} and \ref{thm2} for the change-point estimates are valid when Assumptions \ref{asp1}-\ref{asp4} hold.

From Table \ref{tab:simuresult}, it can be seen that the final change-points estimates are accurate and the coverage probability is quite close to the nominal level of $90\%$. Taking into account all the simulation results, we conclude that GLRSM has a stable performance in most scenarios.

\subsection{GARCH change-point detection performance evaluation\label{(GARCHperformance)}}

In this section we compare the detection accuracy of GLRSM to other multiple change-point estimation procedures such as the BASTA-res in \cite{fryzlewicz2014multiple}. The codes for the BASTA-res are available at http://stats.lse.ac.uk/fryzlewicz/basta/basta.html.  
Following the same setting in \cite{fryzlewicz2014multiple}, we consider different GARCH(1,1) models with sample size $n=1000$ and one change-point occurring at time $t=500$, i.e.,
\begin{equation*}
X_t=\epsilon_t\sigma_t\,, \hspace{1cm} \sigma_t^2=
	\begin{cases}
		\omega_1+\alpha_1 X_{t-1}^2+\beta_1 \sigma_{t-1}^2\,, \ &\textup{if}\ 1\leq t\leq 500\,,\\
		\omega_2+\alpha_2 X_{t-1}^2+\beta_2 \sigma_{t-1}^2\,, \ &\textup{if}\ 501\leq t\leq 1000\,,
	\end{cases}
\end{equation*}
where $\epsilon_{t}\overset{i.i.d.}\sim N(0,1)$. The following ten change-point models are studied.

\vspace{0.5cm}

\begin{tabular}{|l|ccc|ccc|c|l|ccc|ccc|}
		\hline
		Model & $(\omega_1,\alpha_1,\beta_1)$ & $(\omega_2,\alpha_2,\beta_2)$ & & Model & $(\omega_1,\alpha_1,\beta_1)$ & $(\omega_2,\alpha_2,\beta_2)$ \\\hline
		(a)   & $(0.4,0.1,0.5)$ & $(0.4,0.1,0.5)$							  & & (b)   & $(0.1,0.1,0.8)$ & $(0.1,0.1,0.8)$ \\	
		(c)   & $(0.4,0.1,0.5)$ & $(0.4,0.1,0.6)$							  & & (d)   & $(0.4,0.1,0.5)$ & $(0.4,0.1,0.8)$ \\	
		(e)   & $(0.1,0.1,0.8)$ & $(0.1,0.1,0.7)$							  & & (f)   & $(0.1,0.1,0.8)$ & $(0.1,0.1,0.4)$ \\	
		(g)   & $(0.4,0.1,0.5)$ & $(0.5,0.1,0.5)$							  & & (h)   & $(0.4,0.1,0.5)$ & $(0.8,0.1,0.5)$ \\	
		(i)   & $(0.1,0.1,0.8)$ & $(0.3,0.1,0.8)$							  & & (j)   & $(0.1,0.1,0.8)$ & $(0.5,0.1,0.8)$ \\	\hline
\end{tabular}
\vspace{0.5cm}

Note that there is no change-point in Models (a) and (b). Since the BASTA-res does not produce confidence intervals for the change-points, only the proportion of correct number, mean, median, and the standard error of the estimated change-points are compared. The simulation results are summarized in 
Table \ref{tab:garchperf}.

\begin{table}[!ht]
	\footnotesize
	\centering
	\begin{tabular}{*{7}{c}}
		\toprule
		Method & Statistic & (a) & (b) & (c) & (d) & (e) \\\midrule
		\multirow{4}{1.8cm}{BASTA-res} & Accuracy rate & 94.0\% & 89.0\% & 34.8\% & 78.4\% & 71.4\% \\
		& Mean of est. & NA & NA & 519.99 & 502.70 & 493.04 \\
		& Median of est. & NA & NA & 503 & 501 & 499 \\
		& s.e. of est. & NA & NA & 148.43 & 24.16 & 91.99 \\\midrule
		\multirow{4}{1.8cm}{GLRSM} & Accuracy rate & 100\% & 100\% & 4.2\% & 95.2\% & 21.8\% \\
		& Mean of est. & NA & NA & 520.71 & 504.27 & 484.80 \\
		& Median of est. & NA & NA & 505 & 501 & 493 \\
		& s.e. of est. & NA & NA & 43.30 & 21.61 & 50.67 \\\midrule
		
		Method & Statistic & (f) & (g) & (h) & (i) & (j) \\\midrule
		\multirow{4}{1.8cm}{BASTA-res} & Accuracy rate & 77.8\% & 25.2\% & 89.0\% & 68.0\% & 70.4\% \\
		& Mean of est. & 499.34 & 490.85 & 497.93 & 500.07 & 502.66 \\
		& Median of est. & 501 & 495.5 & 500 & 501 & 501 \\
		& s.e. of est. & 15.82 & 150.26 & 75.84 & 47.53 & 20.13 \\\midrule
		\multirow{4}{1.8cm}{GLRSM} & Accuracy rate & 93.4\% & 0.4\% & 65.2\% & 69.6\% & 93.2\% \\
		& Mean of est. & 499.68 & 461.50 & 502.75 & 499.57 & 503.90 \\
		& Median of est. & 500 & 461.5 & 501 & 501 & 501 \\
		& s.e. of est. & 13.41 & 111.02 & 43.88 & 43.98 & 19.31 \\\bottomrule
	\end{tabular}
	\caption{\footnotesize Proportion of times that the correct number of change-points was detected, mean, median and standard error (s.e.) of the change-point estimates for different estimation methods for Models (a) to (j). For GLRSM, the scan window radius $h=\max(100, (\log n)^4/25)$ is used, where $n$ is the length of the series. The number of replications is 500.  
}
	\label{tab:garchperf}
\end{table}

From Table \ref{tab:garchperf}, when the structural change is small (Models (c), (e) and (g)), the performance of GLRSM is not as good as the BASTA-res method in \cite{fryzlewicz2014multiple}. One possible reason is that the short scanning window has less power to identify the change-point when the change is small. However, the GLRSM has higher credibility when no structural change actually occurs (Models (a) and (b)). Also, the GLRSM performs significantly better when the structural difference is large (Models (d), (f) and (j)). In all scenarios, the biases of the change-point estimates given by GLRSM and BASTA-res are similar. However, the standard error of the change-point estimates by GLRSM is always smaller than the standard error by BASTA-res. That is, compared with BASTA-res, the change-point estimates by GLRSM have higher precision.

\section{Conclusion\label{sec:conclusion}}

In this paper, we developed GLRSM as a computationally efficient procedure for multiple change-points inference in general time series models.
Compared with other methods that only yield point estimates, the GLRSM provides a confidence interval around each estimated change-point.
The results of the simulations and real data analyses indicated that the GLRSM performs well in change-points detection and has accurate coverage probabilities. In this paper, the use of likelihood ratio scan statistics is restricted to univariate time series models. Nevertheless, with proper modifications to the scanning procedures and large deviation bounds, the GLRSM could potentially be extended to multivariate settings.

\section{Proofs\label{sec:proof}}

First we state the following lemmas about the asymptotic properties of the likelihood ratio scan statistics, $S_h(t)$, which are required to prove Theorem \ref{thm1}.

\begin{lem} \label{lem1}
	For the j-th change-point $\tau^{0}_{j}$, the scan statistic $S_{h}(\tau^{0}_{j})$ satisfies
	\begin{equation*}
	\begin{aligned}
	S_{h}(\tau^{0}_{j})&=\frac{1}{h}L_{1h}(\tau^{0}_{j},\hat{\boldsymbol{\theta}}_{1})+\frac{1}{h}L_{2h}(\tau^{0}_{j},\hat{\boldsymbol{\theta}}_{2})-\frac{1}{h}L_{\cdot h}(\tau^{0}_{j},\hat{\boldsymbol{\theta}}_{1,2})\\
&\overset{p}{\to}\mathbb{E}\bigg(\log\frac{f_{k,\boldsymbol{\theta}_1}(\boldsymbol\theta_{1})}{f_{k,\boldsymbol{\theta}_1}(\boldsymbol\theta_{1,2})}\bigg)+\mathbb{E}\bigg(\log\frac{f_{k,\boldsymbol{\theta}_2}(\boldsymbol\theta_{2})}{f_{k,\boldsymbol{\theta}_2}(\boldsymbol\theta_{1,2})}\bigg) \triangleq g_{j}>0 \,,
	\end{aligned}
	\end{equation*}
	where
	\begin{equation*}
	\begin{aligned} \hat{\boldsymbol{\theta}}_{1,2}=\operatorname*{arg\,max}_{\boldsymbol{\theta\in\Theta}}\frac{1}{h}\bigg[\sum_{k=\tau^{0}_{j}-h+1}^{\tau^{0}_{j}}l_{k,\boldsymbol{\theta}_1}(\boldsymbol\theta)+\sum_{k=\tau^{0}_{j}+1}^{\tau^{0}_{j}+h}l_{k, \boldsymbol{\theta}_2}(\boldsymbol\theta)\bigg]\,,
	\end{aligned}
	\end{equation*}
	and
	\begin{equation*}
	\begin{aligned} 
	\boldsymbol{\theta}_{1,2}= \operatorname*{arg\,max}_{\boldsymbol{\theta\in\Theta}}\big[\mathbb{E}\big(l_{k,\boldsymbol{\theta}_1}(\boldsymbol\theta)\big)+\mathbb{E}\big(l_{k,\boldsymbol{\theta}_2}(\boldsymbol\theta)\big)\big]\,.
	\end{aligned}
	\end{equation*}
\end{lem}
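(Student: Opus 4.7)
The plan is to decompose $S_h(\tau_j^0)$ into its three normalised log-likelihood pieces, apply a uniform law of large numbers on the compact parameter space $\boldsymbol{\Theta}$ to each, then plug in the resulting MLE limits and finally check positivity via Assumption \ref{asp1}. Throughout, let $\mathbb{E}_j$ denote expectation under the stationary ergodic regime $M(\boldsymbol{\theta}_j^0)$.

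First, because $\{X_{\tau_j^0-h+1},\dots,X_{\tau_j^0}\}$ lies entirely in regime $M(\boldsymbol{\theta}_j^0)$ and $\{X_{\tau_j^0+1},\dots,X_{\tau_j^0+h}\}$ entirely in regime $M(\boldsymbol{\theta}_{j+1}^0)$, Birkhoff's ergodic theorem delivers, for each fixed $\boldsymbol{\theta}$, the pointwise limits $h^{-1}L_{1h}(\tau_j^0,\boldsymbol{\theta})\to \mathbb{E}_j[l_k(\boldsymbol{\theta})]$ and $h^{-1}L_{2h}(\tau_j^0,\boldsymbol{\theta})\to \mathbb{E}_{j+1}[l_k(\boldsymbol{\theta})]$. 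I would then upgrade these to uniform convergence on $\boldsymbol{\Theta}$ via the standard Jennrich--Wald argument: Assumption \ref{asp2} supplies continuity of $l_k(\cdot)$, Assumption \ref{asp4} supplies an integrable envelope $G$, and the compactness of $\boldsymbol{\Theta}$ then yields an equicontinuity/bracketing covering of $\boldsymbol{\Theta}$ by finitely many balls on which pointwise convergence is automatically uniform up to arbitrarily small error. Boundary terms arising from the infinite-past conditioning at the left edge of $L_{1h}$ are dominated by $G$ and contribute $o(1)$ after averaging by $1/h$.

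Second, the uniform convergence together with the uniqueness in Assumption \ref{asp1} permits the standard argmax continuous-mapping argument to conclude $\hat{\boldsymbol{\theta}}_1\overset{p}{\to}\boldsymbol{\theta}_j^0$ and $\hat{\boldsymbol{\theta}}_2\overset{p}{\to}\boldsymbol{\theta}_{j+1}^0$. Applying the same reasoning to $h^{-1}L_{\cdot h}(\tau_j^0,\boldsymbol{\theta})=h^{-1}L_{1h}(\tau_j^0,\boldsymbol{\theta})+h^{-1}L_{2h}(\tau_j^0,\boldsymbol{\theta})$, whose uniform limit is $\mathbb{E}_j[l_k(\boldsymbol{\theta})]+\mathbb{E}_{j+1}[l_k(\boldsymbol{\theta})]$, and whose pseudo-true maximiser is exactly $\boldsymbol{\theta}_{1,2}$ as defined in the lemma, I obtain $\hat{\boldsymbol{\theta}}_{1,2}\overset{p}{\to}\boldsymbol{\theta}_{1,2}$. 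Plugging the consistent MLEs back into the uniformly convergent criteria (so that $\sup_{\boldsymbol{\theta}}|\cdot|$ controls the random-argument evaluation) yields
\begin{equation*}
S_h(\tau_j^0)\overset{p}{\to} \mathbb{E}_j\bigl[l_k(\boldsymbol{\theta}_j^0)-l_k(\boldsymbol{\theta}_{1,2})\bigr]+\mathbb{E}_{j+1}\bigl[l_k(\boldsymbol{\theta}_{j+1}^0)-l_k(\boldsymbol{\theta}_{1,2})\bigr],
\end{equation*}
which matches the stated formula for $g_j$.

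Finally, positivity follows at once from Assumption \ref{asp1}: $\boldsymbol{\theta}_j^0$ is the unique maximiser of $\mathbb{E}_j[l_k(\cdot)]$ and $\boldsymbol{\theta}_{j+1}^0$ is the unique maximiser of $\mathbb{E}_{j+1}[l_k(\cdot)]$, with $\boldsymbol{\theta}_j^0\neq\boldsymbol{\theta}_{j+1}^0$. Hence the joint maximiser $\boldsymbol{\theta}_{1,2}$ cannot coincide with both, so at least one of the two summands above is strictly positive while the other is non-negative, forcing $g_j>0$. The step I expect to be the main obstacle is the uniform law of large numbers for the stationary ergodic process over the compact parameter space; once this, together with the truncation of the infinite-past conditioning at the left boundary of the window, is handled cleanly using the envelope in Assumption \ref{asp4}, the remainder of the argument is a routine consistency-and-plug-in exercise.
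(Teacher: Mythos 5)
Your proposal is correct, and for the convergence part it follows essentially the same route as the paper: decompose $S_h(\tau_j^0)$ into the three normalised likelihood pieces, establish consistency $\hat{\boldsymbol{\theta}}_1\overset{p}{\to}\boldsymbol{\theta}_1$, $\hat{\boldsymbol{\theta}}_2\overset{p}{\to}\boldsymbol{\theta}_2$, $\hat{\boldsymbol{\theta}}_{1,2}\overset{p}{\to}\boldsymbol{\theta}_{1,2}$ via a law of large numbers plus an argmax argument, and plug in. (You spell out the ULLN/envelope machinery more explicitly than the paper, which simply inserts an $o_p(1)$; this is a presentational rather than substantive difference.) Where you genuinely diverge is the positivity step. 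The paper proves $g_j>0$ by an information-theoretic route: Jensen's inequality gives $\mathbb{E}\bigl(\log\frac{f_{k,\boldsymbol{\theta}_1}(\boldsymbol{\theta}_1)}{f_{k,\boldsymbol{\theta}_1}(\boldsymbol{\theta}_{1,2})}\bigr)\geq -\log\mathbb{E}\bigl(\frac{f_{k,\boldsymbol{\theta}_1}(\boldsymbol{\theta}_{1,2})}{f_{k,\boldsymbol{\theta}_1}(\boldsymbol{\theta}_1)}\bigr)=0$, and strictness is argued by noting the likelihood ratio is not almost surely constant. Your route instead invokes the unique-maximiser property in Assumption \ref{asp1} directly: each summand of $g_j$ is nonnegative, and since $\boldsymbol{\theta}_j^0\neq\boldsymbol{\theta}_{j+1}^0$ the joint pseudo-true value $\boldsymbol{\theta}_{1,2}$ cannot coincide with both, so at least one summand is strictly positive. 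Your argument is arguably tighter: the paper's strictness step relies on an informal claim that the ratio does not degenerate to a constant, whereas yours reduces strictness to an explicitly stated hypothesis; the trade-off is that the paper's Jensen argument would survive even if uniqueness of the maximiser were weakened, identifying $g_j$ as a sum of Kullback--Leibler-type divergences. Both are valid proofs of the lemma as stated.
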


\begin{lem} \label{lem2}
	For any $\epsilon>$0, there exists a positive integer $H$ such that for any $h>H$,
	\begin{equation*}
	\begin{aligned}
	\mathbb{P}(|S_{h}(t)|>\epsilon)\leq 6\exp(-\frac{1}{4} h^{1/3} \epsilon^{2/3})\,, 
	\end{aligned}
	\end{equation*}
	for all $t$ such that $W_t(h)$ does not contain any change-point.
\end{lem}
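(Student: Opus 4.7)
Since $W_t(h)$ does not contain any change-point, every observation in the window is generated from the same model $M(\boldsymbol{\theta}^*)$, so $g(\boldsymbol{\theta}) \triangleq \mathbb{E}[l_k(\boldsymbol{\theta})]$ is the same for all $k\in W_t(h)$, and by Assumption \ref{asp1} it has a unique maximizer $\boldsymbol{\theta}^*$. Using $L_{\cdot h}(t,\boldsymbol{\theta}) = L_{1h}(t,\boldsymbol{\theta}) + L_{2h}(t,\boldsymbol{\theta})$, add and subtract $\boldsymbol{\theta}^*$ to write
\begin{equation*}
h\, S_h(t) = [L_{1h}(t,\hat{\boldsymbol{\theta}}_1)-L_{1h}(t,\boldsymbol{\theta}^*)] + [L_{2h}(t,\hat{\boldsymbol{\theta}}_2)-L_{2h}(t,\boldsymbol{\theta}^*)] - [L_{\cdot h}(t,\hat{\boldsymbol{\theta}})-L_{\cdot h}(t,\boldsymbol{\theta}^*)].
\end{equation*}
Each bracket is the difference between the log-likelihood at an MLE and at $\boldsymbol{\theta}^*$, which is both non-negative (by the maximizing property) and bounded above by $2\sup_{\boldsymbol{\theta}\in\boldsymbol{\Theta}} \bigl|\,h_\ast^{-1}\sum_k (l_k(\boldsymbol{\theta})-g(\boldsymbol{\theta}))\bigr|$, where $h_\ast$ is the relevant sample size ($h$ or $2h$). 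Thus $|S_h(t)|$ is dominated by a fixed linear combination of three uniform empirical-process deviations, and it suffices to show each satisfies a bound of the form $2\exp(-h^{1/3}\epsilon^{2/3}/4)$.

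Fix $\boldsymbol{\theta}\in\boldsymbol{\Theta}$ and set $Y_k(\boldsymbol{\theta})=l_k(\boldsymbol{\theta})-g(\boldsymbol{\theta})$. For the pointwise deviation I would use a truncation argument driven by Assumption \ref{asp3}: choose $M = (h\epsilon^2/8)^{1/3}$ and split $Y_k = \tilde Y_k + (Y_k-\tilde Y_k)$ with $\tilde Y_k = Y_k \mathbbm{1}\{|Y_k|\le M\}$. By Markov and $\mathbb{E} e^{|Y_k|}\le K$, $\mathbb{P}(|Y_k|>M)\le Ke^{-M}$, so a union bound over $k$ produces a tail contribution of order $hKe^{-M}$. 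On the complementary event $\tilde Y_k-\mathbb{E}\tilde Y_k$ is mean-zero and bounded by $2M$, and the mean correction $|\mathbb{E}\tilde Y_k|\le Ke^{-M}$ is negligible; applying a Hoeffding/Bernstein-type bound gives a contribution of order $\exp(-h\epsilon^2/(cM^2)) = \exp(-c\,h^{1/3}\epsilon^{2/3})$. Balancing the two pieces yields the pointwise rate, and taking $h$ large enough (hence the threshold $H$) absorbs the polynomial pre-factor $hK$ into the exponential. To upgrade to a supremum over $\boldsymbol{\Theta}$, I would cover the compact parameter space by a $\delta$-net of size $N\lesssim \delta^{-p}$, apply the pointwise bound at each net point with budget $\epsilon/2$, and use Assumptions \ref{asp2}--\ref{asp4} to control the Lipschitz modulus of $h_\ast^{-1}\sum_k Y_k(\boldsymbol{\theta})$ by a random constant of order $1$, so the $\delta$-perturbation contributes at most $\epsilon/2$ for a $\delta$ depending only on $\epsilon$. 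The union bound over $N$ net points merely rescales the leading constant, leaving the exponent $h^{1/3}\epsilon^{2/3}/4$ intact.

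The principal technical obstacle is the dependence in $\{Y_k(\boldsymbol{\theta})\}_k$: Assumption \ref{asp3} is only a marginal exponential-moment bound, whereas the Hoeffding/Bernstein step above implicitly assumes independence or quantified weak dependence. Justifying the inequality therefore needs either an underlying mixing assumption for $\{X_t\}$ (which holds for the AR, ARMA and GARCH models considered in Section \ref{sec:simulation}) or a blocking/coupling argument that trades a small number of blocks against the exponential tail — and the choice $M=O((h\epsilon^2)^{1/3})$ is precisely the sweet spot at which a truncation-plus-blocking scheme yields the subexponential rate $\exp(-h^{1/3}\epsilon^{2/3}/4)$ rather than the usual $\exp(-h\epsilon^2)$. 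A secondary, lesser, obstacle is verifying that the Lipschitz constant in the covering step can be chosen uniformly in $h$ using only a bounded-by-integrable-majorant condition on the gradient of $l_k$, which should follow from Assumption \ref{asp4} applied to $\partial l_k/\partial\boldsymbol{\theta}$.
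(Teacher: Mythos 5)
Your decomposition of $hS_h(t)$ into three MLE-versus-true-parameter differences, each controlled by a centered empirical sum, is exactly the paper's starting point (its display \eqref{sht}--\eqref{sht2}), and your diagnosis that the exponent $h^{1/3}\epsilon^{2/3}$ is the balance point of a truncation at level $M=O((h\epsilon^2)^{1/3})$ is the correct heuristic for where that rate comes from. But the step you flag as ``the principal technical obstacle'' is a genuine gap, and it is precisely where the paper deploys its one essential tool: the centered summands $Y_k(\boldsymbol{\theta})=l_k(\boldsymbol{\theta})-\mathbb{E}[l_k(\boldsymbol{\theta})]$ are treated as a martingale difference sequence with respect to $\mathcal{F}_{k-1}$, and Theorem 3.2 of \cite{lesigne2001large} is invoked, which yields
\begin{equation*}
\mathbb{P}\Bigl(\Bigl|\tfrac{1}{h}\sum_{k}Y_k(\boldsymbol{\theta})\Bigr|>\epsilon/3\Bigr)\le e^{-\frac14 h^{1/3}\epsilon^{2/3}}
\end{equation*}
directly from the uniform exponential moment bound of Assumption \ref{asp3} --- no independence, no mixing, no blocking or coupling. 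Your plan instead requires either an unstated mixing condition or a blocking scheme whose details you do not supply, so as written the central inequality is not established within the paper's assumptions. The missing idea is the martingale large-deviation theorem, not a sharper concentration bound for weakly dependent data.

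Two secondary remarks. First, the paper does not run a covering-net/chaining argument over $\boldsymbol{\Theta}$: it proves the pointwise bound for each fixed $\boldsymbol{\theta}$ (with the constant $K^*$ uniform over the compact parameter space), applies it at $\hat{\boldsymbol{\theta}}$ and at $\boldsymbol{\theta}_0$, and handles the deterministic gap $\mathbb{E}[l_k(\hat{\boldsymbol{\theta}})]-\mathbb{E}[l_k(\boldsymbol{\theta}_0)]$ separately via uniform integrability and consistency of $\hat{\boldsymbol{\theta}}$; your chaining step is arguably the more careful way to justify evaluating the bound at a random parameter, so this part of your plan is a defensible (if heavier) alternative. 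Second, the constant $6$ in the statement arises in the paper as $2+2+2$: each of the three sums in \eqref{sht} contributes two applications of the pointwise bound (one at the MLE, one at $\boldsymbol{\theta}_0$), with an $\epsilon/3$ budget per piece --- consistent with the accounting implicit in your linear-combination bound.
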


\begin{lem} \label{lem3}
	For any $\epsilon>0$, there exists a positive integer $H$ such that for any $h>H$,
	\begin{equation*}
	\begin{aligned}
	\mathbb{P}\big(|S_{h}(\tau^{0}_{j})-g_j|>\epsilon\big)\leq 22\exp(-\frac{1}{4} h^{1/3} \epsilon^{2/3})\,,
	\end{aligned}
	\end{equation*}
	for all $j=1,\dots,m_0$.
\end{lem}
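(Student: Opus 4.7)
The plan is to mirror the structure of Lemma \ref{lem2} and decompose $S_h(\tau_j^0)-g_j$ into three differences, one for each likelihood in the scan statistic, controlling each via uniform concentration over the compact parameter space $\boldsymbol\Theta$. Writing $\mathbb{E}_j[\cdot]$ for expectation under $M(\boldsymbol\theta_j^0)$, Lemma \ref{lem1} gives
\begin{equation*}
g_j=\mathbb{E}_j[l_k(\boldsymbol\theta_j^0)]+\mathbb{E}_{j+1}[l_k(\boldsymbol\theta_{j+1}^0)]-\mathbb{E}_j[l_k(\boldsymbol\theta_{1,2})]-\mathbb{E}_{j+1}[l_k(\boldsymbol\theta_{1,2})],
\end{equation*}
so I would split
\begin{equation*}
S_h(\tau_j^0)-g_j=A_1+A_2-A_3,
\end{equation*}
with $A_1=h^{-1}L_{1h}(\tau_j^0,\hat{\boldsymbol\theta}_1)-\mathbb{E}_j[l_k(\boldsymbol\theta_j^0)]$, $A_2=h^{-1}L_{2h}(\tau_j^0,\hat{\boldsymbol\theta}_2)-\mathbb{E}_{j+1}[l_k(\boldsymbol\theta_{j+1}^0)]$, and $A_3=h^{-1}L_{\cdot h}(\tau_j^0,\hat{\boldsymbol\theta}_{1,2})-\mathbb{E}_j[l_k(\boldsymbol\theta_{1,2})]-\mathbb{E}_{j+1}[l_k(\boldsymbol\theta_{1,2})]$. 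It then suffices to show each event $\{|A_i|>\epsilon/3\}$ has exponentially small probability and to combine them by a union bound.

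For $A_1$ and $A_2$, which are based on single-regime MLEs, I would use the standard two-sided sandwich: the MLE inequality $h^{-1}L_{1h}(\hat{\boldsymbol\theta}_1)\ge h^{-1}L_{1h}(\boldsymbol\theta_j^0)$ gives the lower bound, while $h^{-1}L_{1h}(\hat{\boldsymbol\theta}_1)\le\sup_{\boldsymbol\theta}h^{-1}L_{1h}(\boldsymbol\theta)$ together with the uniqueness of $\boldsymbol\theta_j^0$ as maximizer of $\mathbb{E}_j[l_k(\boldsymbol\theta)]$ (Assumption \ref{asp1}) gives the upper bound. Both directions reduce to the uniform statement $\sup_{\boldsymbol\theta\in\boldsymbol\Theta}|h^{-1}\sum_{k}l_k(\boldsymbol\theta)-\mathbb{E}[l_k(\boldsymbol\theta)]|\le\epsilon'$, which is the ingredient already used inside Lemma \ref{lem2}: covering $\boldsymbol\Theta$ (possible by compactness together with the continuity in Assumption \ref{asp2}), applying a Bernstein-type tail at each center via the exponential-moment bound of Assumption \ref{asp3}, and absorbing the Lipschitz residual through the integrable envelope of Assumption \ref{asp4}. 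This delivers the $\exp(-\tfrac14 h^{1/3}\epsilon^{2/3})$ rate for each direction.

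The principal obstacle is $A_3$, because $W_{\tau_j^0}(h)$ straddles the change-point and $\hat{\boldsymbol\theta}_{1,2}$ maximizes only the \emph{combined} objective $h^{-1}\sum_{k=\tau_j^0-h+1}^{\tau_j^0+h}l_k(\boldsymbol\theta)$ rather than either half individually. I would split $L_{\cdot h}(\tau_j^0,\boldsymbol\theta)$ into its segment-$j$ and segment-$(j+1)$ halves and apply uniform concentration separately to each, matching each half to its own population objective $\mathbb{E}_j[l_k(\boldsymbol\theta)]$ and $\mathbb{E}_{j+1}[l_k(\boldsymbol\theta)]$. Because $\boldsymbol\theta_{1,2}$ is the maximizer only of the sum of these two objectives (not of either piece alone), the sandwich argument must be performed on the aggregated objective and evaluated at both $\hat{\boldsymbol\theta}_{1,2}$ and $\boldsymbol\theta_{1,2}$; this forces additional applications of the basic tail estimate beyond the six needed for Lemma \ref{lem2}, yielding the prefactor $22$ while leaving the exponent $-\tfrac14 h^{1/3}\epsilon^{2/3}$ intact. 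Finally, since all bounds are uniform in $j\in\{1,\dots,m_0\}$ and $m_0$ is finite, the conclusion holds for every $j$ with the stated rate.
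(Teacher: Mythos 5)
Your decomposition of $S_h(\tau_j^0)-g_j$ into the three half-window/full-window deviations $A_1+A_2-A_3$, followed by a union bound, is the same skeleton the paper uses: its equation for $S_h(\tau_j^0)-g_j$ is exactly this split, refined one level further into seven pieces --- for each likelihood, a term of the form $\frac{1}{h}\sum_k\big(l_k(\hat{\boldsymbol\theta})-l_k(\boldsymbol\theta^{\ast})\big)$ measuring the effect of replacing the (pseudo-)true parameter by its estimator, plus a centered sum $\frac{1}{h}\sum_k\big(l_k(\boldsymbol\theta^{\ast})-\mathbb{E}\,l_k(\boldsymbol\theta^{\ast})\big)$ evaluated at the fixed limit. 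The constant $22$ is then just arithmetic: the three estimator-replacement terms each cost $6e^{-\frac14 h^{1/3}\epsilon^{2/3}}$ (via Lemma \ref{lem2} and, for the $\hat{\boldsymbol\theta}_{1,2}$ term, a martingale-difference argument for the score at $\boldsymbol\theta_{1,2}$), and the four centered sums each cost $e^{-\frac14 h^{1/3}\epsilon^{2/3}}$, giving $6+6+6+4=22$. Your write-up asserts the prefactor rather than deriving it, but that part is repairable.

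The genuine gap is in the concentration step itself. The summands $l_k(\boldsymbol\theta)-\mathbb{E}[l_k(\boldsymbol\theta)]$ within a segment are \emph{dependent} (the data form a stationary ergodic time series), so a ``Bernstein-type tail at each center'' does not apply and, in particular, does not produce the specific rate $\exp(-\tfrac14 h^{1/3}\epsilon^{2/3})$. The paper's engine is Theorem 3.2 of \cite{lesigne2001large}, a large-deviation inequality for \emph{martingale difference} sequences with the exponential moment bound of Assumption \ref{asp3}; the $h^{1/3}\epsilon^{2/3}$ exponent is characteristic of that result and of nothing you invoke. Without identifying the martingale-difference structure of the centered scores (and of $Y_k(\boldsymbol\theta)$), your argument has no valid tail bound for any single term, uniform or not. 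A secondary issue: your covering argument leans on Assumption \ref{asp4} to ``absorb the Lipschitz residual,'' but an integrable envelope $G(\boldsymbol X_t)$ bounds $|l_t(\boldsymbol\theta)|$, not the modulus of continuity of $l_t$ in $\boldsymbol\theta$, so it cannot control the discretization error at an exponential scale; the paper sidesteps this by applying its pointwise-in-$\boldsymbol\theta$ bound directly at $\hat{\boldsymbol\theta}$ and handling $\mathbb{E}[l_k(\hat{\boldsymbol\theta})]-\mathbb{E}[l_k(\boldsymbol\theta_0)]$ separately via uniform integrability and consistency, rather than by chaining.
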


\begin{proof}[\bf{Proof of Theorem \ref{thm1}:}]
\rm Let $A_t=$ \{some point in the $t$-th local-window is a local change-point estimate\} and let $A=\bigcap_{t\in\mathcal{J}_0}A_t$. The proof of Theorem \ref{thm1} is completed if we can prove \textbf{P}($A$)$\to 1$ as $n\to\infty$.

Let $\mathbb{Z}_n=\{1,2,\dots,n\}$. Define $\mathcal{E}=\mathbb{Z}_n\backslash\{\bigcup_{t\in\mathcal{J}_0}W_{t}(h)\}$ as the set of all points outside the $h$-neighborhood of the true change-points. One sufficient condition for the event $A$ to occur is that
\begin{equation} \label{thm1_1}
	\min_{t\in\mathcal{J}_0}S_h(t)>\max_{t\in\mathcal{E}}S_h(t)\,.
\end{equation}

Let $g=\frac{1}{2}\min_{j=1,\dots,m_0}(g_j)$ where $g_j$'s are defined in Lemma \ref{lem1}. Note from \eqref{thm1_1} that

\begin{equation} \label{thm1_2}
\mathbb{P}(A)\geq \mathbb{P}\bigg(\min_{t\in\mathcal{J}_0}S_h(t)>g>\max_{t\in\mathcal{E}}S_h(t)\bigg)\,.
\end{equation}

From Lemma \ref{lem3} and the definition of $g$, it can be shown that $\mathbb{P}\big(S_{h}(t)\leq g\big)\leq 22\exp(-\frac{1}{4} h^{1/3} g^{2/3})$, for all $t\in\mathcal{J}_0$. Thus,
\begin{equation}\label{thm1_3}
\begin{aligned}
\mathbb{P}\big(\min_{t\in\mathcal{J}_0}S_h(t)>g\big)&=1-\mathbb{P}\bigg(\bigcup_{t\in\mathcal{J}_0}\{S_h(t)\leq g\}\bigg)\geq 1-\sum_{t\in\mathcal{J}_0}\mathbb{P}(S_h(t)\leq g)\\
&\geq 1-22 m_0 \exp(-\frac{1}{4} h^{1/3} g^{2/3}) \to 1\,, 
\end{aligned}
\end{equation}
provided that $h=d (\log n)^3$ for some $d>0$ and $m_0=O(1)$. When $t\in\mathcal{E}$, all observations in $S_h(t)$ belong to one stationary piece. By Lemma \ref{lem2}, we have $\mathbb{P}(S_{h}(t)\geq g)<6\exp(-\frac{1}{4} h^{1/3} g^{2/3})$ for all $t\in\mathcal{E}$. Let $h=d (\log n)^3$ for some $d>64/g^2$, we have
\begin{equation}\label{thm1_4}
\begin{aligned}
\mathbb{P}\big(g>\max_{t\in\mathcal{E}}S_h(t)\big)&=1-\mathbb{P}\bigg(\bigcup_{t\in\mathcal{E}}\{S_h(t)\geq g\}\bigg)\geq 1-\sum_{j=1}^{m_0+1}(\tau_j^0-\tau_{j-1}^0)\mathbb{P}(S_h(t_j)\geq g)\\
&>1-6 n \exp(-\frac{1}{4} h^{1/3} g^{2/3})\to 1\,, 
\end{aligned}
\end{equation}
where $t_j\in(\tau_{j-1}^0+h, \tau_j^0-h)$. Combining \eqref{thm1_2}, \eqref{thm1_3}, and \eqref{thm1_4} yields $\mathbb{P}(A)\to 1$ when $h=d (\log n)^3$ for some $d>64/g^2$, completing the proof of Theorem \ref{thm1}.
\end{proof}

\begin{proof}[\bf{Proof of Theorem \ref{thm2}:}]
\rm The proof of this theorem is essentially the same as that of Theorem 2 in \cite{yau2015inference}, and hence is omitted.
\end{proof}

\begin{proof}[\bf{Proof of Theorem \ref{thm3}:} ]
\rm Based on Assumption \ref{asp4}, there exists a function $G(\boldsymbol{Z}_t)$ that dominates $|l_t(\boldsymbol{\theta}_j)|$ for all $\boldsymbol{\theta}_j\in\boldsymbol{\Theta}_j$. By using the uniform law of large number (ULLN) in \cite{jennrich1969asymptotic}, we have as $h\to\infty$, $\frac{1}{h}\sum_{t=1}^{h}l_t(\boldsymbol{\theta}_j)$ converges uniformly to $\mathbb{E}_{\boldsymbol{\theta}_j^0}(l_t(\boldsymbol{\theta}_j)), \forall \boldsymbol{\theta}_j\in\boldsymbol{\Theta}_j$. The remaining part of the proof is the same as that of Theorem 3 in \cite{yau2015inference}.

In practice, if the form of the underlying model is known, it is not difficult to find a dominating function $G(\boldsymbol{Z}_t)$ that satisfying Assumption \ref{asp4}. For example, if an AR($p$) model is used, we can take $G(Z_t,\dots,Z_{t-p})=|\frac{1}{2}\log(2\pi \sigma^2)|+(\sum_{j=0}^{p}Z_j^2+2\sum_{i\neq j}|Z_i Z_j|)/2\sigma^2$. 
\end{proof}
\begin{proof}[\bf{Proof of Lemma \ref{lem1}:}]
\rm Denote the conditional likelihood function at time $k$ given $\mathcal{F}_{k-1}$ with parameter $\boldsymbol{\theta}$ as $f(X_k,\boldsymbol{\theta})$.
As $h\to\infty$, we have 
\begin{eqnarray*}
\hat{\boldsymbol{\theta}}_{1,2}&=&\operatorname*{arg\,max}_{\boldsymbol{\theta\in\Theta}}\frac{1}{h}\big[\sum_{k=\tau^{0}_{j}-h+1}^{\tau^{0}_{j}}l_{1}(k,\boldsymbol\theta)+\sum_{k=\tau^{0}_{j}+1}^{\tau^{0}_{j}+h}l_{2}(k,\boldsymbol\theta)\big]\\
&=& \operatorname*{arg\,max}_{\boldsymbol{\theta\in\Theta}}\big[\mathbb{E}\big(l_{1}(k,\boldsymbol\theta)\big)+\mathbb{E}\big(l_2(k,\boldsymbol\theta)\big)+o_p(1)\big] 
\overset{p}{\to}
\boldsymbol{\theta}_{1,2}\,.
\end{eqnarray*}
Combining with $\hat{\boldsymbol{\theta}}_1\overset{p}{\to}\boldsymbol{\theta}_1$ and $\hat{\boldsymbol{\theta}}_2\overset{p}{\to}\boldsymbol{\theta}_2$, we have
\begin{align}
S_h(\tau_j^0)&=\frac{1}{h}\sum_{k=\tau^{0}_{j}-h+1}^{\tau^{0}_{j}}\log(f_{k,\boldsymbol{\theta_1}}(\hat{\boldsymbol\theta}_{1})+\frac{1}{h}\sum_{k=\tau^{0}_{j}+1}^{\tau^{0}_{j}+h}\log(f_{k,\boldsymbol{\theta}_2}(\hat{\boldsymbol\theta}_{2}))-\frac{1}{h}\sum_{k=\tau^{0}_{j}-h+1}^{\tau^{0}_{j}}\log(f_{k,\boldsymbol{\theta}_1}(\hat{\boldsymbol\theta}_{1,2}))\nonumber\\
&\quad-\frac{1}{h}\sum_{k=\tau^{0}_{j}+1}^{\tau^{0}_{j}+h}\log(f_{k,\boldsymbol{\theta}_2}(\hat{\boldsymbol\theta}_{1,2}))\nonumber\\
&=\frac{1}{h}\sum_{k=\tau^{0}_{j}-h+1}^{\tau^{0}_{j}}\log\frac{f_{k,\boldsymbol{\theta}_1}(\hat{\boldsymbol\theta}_{1})}{f_{k,\boldsymbol{\theta}_1}(\hat{\boldsymbol\theta}_{1,2})}+\frac{1}{h}\sum_{k=\tau^{0}_{j}+1}^{\tau^{0}_{j}+h}\log\frac{f_{k,\boldsymbol{\theta}_2}(\hat{\boldsymbol\theta}_{2})}{f_{k,\boldsymbol{\theta}_2}(\hat{\boldsymbol\theta}_{1,2})}\nonumber\\
&\overset{p}{\to}\mathbb{E}\bigg(\log\frac{f_{k,\boldsymbol{\theta}_1}(\boldsymbol\theta_{1})}{f_{k,\boldsymbol{\theta}_1}(\boldsymbol\theta_{1,2})}\bigg)+\mathbb{E}\bigg(\log\frac{f_{k,\boldsymbol{\theta}_2}(\boldsymbol\theta_{2})}{f_{k,\boldsymbol{\theta}_2}(\boldsymbol\theta_{1,2})}\bigg)=g_j\,. 
\label{two.E.gj}
\end{align}
For the first expectation in \eqref{two.E.gj}, 
the Jensen's Inequality implies that
\begin{equation*}
\mathbb{E}\bigg(\log\frac{f_{k,\boldsymbol{\theta}_1}(\boldsymbol\theta_{1})}{f_{k,\boldsymbol{\theta}_1}(\boldsymbol\theta_{1,2})}\bigg)=-\mathbb{E}\bigg(\log\frac{f_{k,\boldsymbol{\theta}_1}(\boldsymbol\theta_{1,2})}{f_{k,\boldsymbol{\theta}_1}(\boldsymbol\theta_1)}\bigg)\geq -\log\bigg[\mathbb{E}\bigg(\frac{f_{k,\boldsymbol{\theta}_1}(\boldsymbol\theta_{1,2})}{f_{k,\boldsymbol{\theta}_1}(\boldsymbol\theta_1)}\bigg)\bigg]\,.
\end{equation*}
Since the observations come from the segment with the true parameter $\boldsymbol\theta_1$, we thus have
\begin{align}
\log\bigg[\mathbb{E}\bigg(\frac{f_{k,\boldsymbol{\theta}_1}(\boldsymbol\theta_{1,2})}{f_{k,\boldsymbol{\theta}_1}(\boldsymbol\theta_1)}\bigg)\bigg]&=\log\bigg[\int_{-\infty}^{\infty}\frac{f_{k,\boldsymbol{\theta}_1}(\boldsymbol\theta_{1,2})}{f_{k,\boldsymbol{\theta}_1}(\boldsymbol\theta_1)}f_{k,\boldsymbol{\theta}_1}(\boldsymbol\theta_1)dx_k\bigg]\nonumber 
=\log 1=0\,.
\end{align}
Notice that since $\boldsymbol\theta_{1,2}$ depends on two distinct time series models specified by $\boldsymbol\theta_1$ and $\boldsymbol\theta_2$ respectively, then $\frac{f_{k,\boldsymbol{\theta}_1}(\boldsymbol\theta_{1,2})}{f_{k,\boldsymbol{\theta}_1}(\boldsymbol\theta_1)}$ does not satisfy the condition that $\frac{f_{k,\boldsymbol{\theta}_1}(\boldsymbol\theta_{1,2})}{f_{k,\boldsymbol{\theta}_1}(\boldsymbol\theta_1)}\to c$ a.s., where $c$ is a constant. Also, the log-function is not a linear function. Therefore, $\mathbb{E}\big(\log\frac{f_{k,\boldsymbol{\theta}_1}(\boldsymbol\theta_{1})}{f_{k,\boldsymbol{\theta}_1}(\boldsymbol\theta_{1,2})}\big)>0$, and the equality could not be achieved. Finally, using similar arguments for the second expectation in \eqref{two.E.gj}, we have $g_j>0$, and thus Lemma \ref{lem1} follows.
\end{proof}
\begin{proof}[\bf{Proof of Lemma \ref{lem2}:}]
\rm When there is no change-point in the scanning window $W_{t}(h)$, we use the notation $l_k(\boldsymbol{\theta})$ to represent $l_{k,\boldsymbol{\theta}_0}(\boldsymbol{\theta})$, since all data are from the segment specified by $\boldsymbol{\theta}_0$. Hence, we can write the scan statistic as
\begin{equation} \label{sht}
S_{h}(t)=\frac{1}{h}\sum_{k=t-h+1}^t[l_{k}(\hat{\boldsymbol\theta}_{1})-l_{k}(\boldsymbol\theta_{0})]+\frac{1}{h}\sum_{k=t+1}^{t+h}[l_{k}(\hat{\boldsymbol\theta}_{2})-l_{k}(\boldsymbol\theta_{0})]-\frac{1}{h}\sum_{k=t-h+1}^{t+h}[l_{k}(\hat{\boldsymbol\theta})-l_{k}(\boldsymbol\theta_{0})]\,,
\end{equation}
where $\hat{\boldsymbol\theta}_{1}$, $\hat{\boldsymbol\theta}_{2}$, and $\hat{\boldsymbol\theta}$ are the maximum likelihood estimators (MLE) of the parameter $\boldsymbol\theta$ in the left half, right half, and the entire scanning window, respectively.

We first decompose the third sum in \eqref{sht} into the following form,
\begin{align} \label{sht2}
	\frac{1}{h}\sum_{k=t-h+1}^{t+h}[l_{k}(\hat{\boldsymbol\theta})-l_{k}(\boldsymbol\theta_{0})] &= \frac{1}{h}\sum_{k=t-h+1}^{t+h}[l_{k}(\hat{\boldsymbol\theta})-{\rm{E}} (l_{k}(\hat{\boldsymbol\theta}))] - \frac{1}{h}\sum_{k=t-h+1}^{t+h}[l_{k}(\boldsymbol\theta_0)-{\rm{E}} (l_{k}(\boldsymbol\theta_0))] \nonumber\\
	& \quad + [{\rm{E}} (l_{k}(\hat{\boldsymbol\theta})) - {\rm{E}} (l_{k}(\boldsymbol\theta_0))] \,.
\end{align}
By Assumptions \ref{asp2}, \ref{asp3} and the compactness of the parameter space $\Theta$, 
there exists a positive constant $K^*$, such that ${\rm{E}} (e^{\left|Y_{k}({\boldsymbol{\theta}})\right|})\leq K^*$ for all $k \in \mathbb{N}$ and  
$\boldsymbol{\theta} \in \boldsymbol{\Theta}$. Then by Theorem 3.2 of \cite{lesigne2001large}, 
for any $\epsilon>0$, there exists a positive integer $H_0$ depending only on $K^*$ and $\epsilon$, 
such that for any $h>H_0$ and $\boldsymbol{\theta} \in \boldsymbol{\Theta}$, 
\begin{equation} \label{liklp}
\mathbb{P}\bigg(\bigg|\frac{1}{h}\sum_{k=t-h+1}^{t+h}[l_{k}(\boldsymbol\theta)-{\rm{E}} (l_{k}(\boldsymbol\theta))]\bigg|> \epsilon/3 \bigg)\leq e^{-\frac{1}{4} h^{1/3} \epsilon^{2/3}}\,,
\end{equation}
Moreover, the exponential moment condition ${\rm{E}} (e^{\left|Y_{k}({\boldsymbol{\theta}})\right|})\leq K^*$ for all $\boldsymbol{\theta} \in \boldsymbol{\Theta}$ also implies the uniform integrability of $\{l_{k}(\boldsymbol\theta): \boldsymbol\theta \in \boldsymbol\Theta\}$. Thus, together with $\hat{\boldsymbol\theta}\overset{p}{\to} \boldsymbol\theta_0$, we have that for any $\epsilon>0$, there exists a constant $H_1>0$ such that for any $h>H_1$,
\begin{equation}\label{expcov}
\left|{\rm{E}} (l_{k}(\hat{\boldsymbol\theta})) - {\rm{E}} (l_{k}(\boldsymbol\theta_0))\right| < \epsilon/3\,.
\end{equation}

As a result, applying \eqref{liklp} on both $\hat{\boldsymbol{\theta}}$ and $\boldsymbol{\theta}_0$ and \eqref{expcov} we have that for any $\epsilon>0$, there exists a positive integer $H^*$ such that for any $h>H^*$,
\begin{equation*}
\mathbb{P}\bigg(\bigg|\frac{1}{h}\sum_{k=t-h+1}^{t+h}[l_{k}(\hat{\boldsymbol\theta})-l_{k}(\boldsymbol\theta_{0})]\bigg|>\epsilon \bigg)\leq 2e^{-\frac{1}{4} h^{1/3} \epsilon^{2/3}}\,.
\end{equation*}
Similarly exponential inequalities  hold for the other two sums in {\eqref{sht}}, hence Lemma {\ref{lem2}} follows.
\end{proof}

\begin{proof}[\bf{Proof of Lemma \ref{lem3}:}]
\rm Using the notations in Lemma \ref{lem1}, we have
\begin{align}
S_h(\tau_j^0)-g_j&=\bigg[\frac{1}{h}\sum_{k=\tau_j^0-h+1}^{\tau_j^0}l_{k,\boldsymbol{\theta}_1}(\hat{\boldsymbol{\theta}}_1)-E(l_{k,\boldsymbol{\theta}_1}(\boldsymbol{\theta}_1))\bigg]+\bigg[\frac{1}{h}\sum_{k=\tau_j^0+1}^{\tau_j^0+h}l_{k,\boldsymbol{\theta}_2}(\hat{\boldsymbol{\theta}}_2)-E(l_{k,\boldsymbol{\theta}_2}(\boldsymbol{\theta}_2))\bigg]\nonumber\\
&\quad-\bigg[\frac{1}{h}\sum_{k=\tau_j^0-h+1}^{\tau_j^0}\big(l_{k,\boldsymbol{\theta}_1}(\hat{\boldsymbol{\theta}}_{1,2})+l_{k+h,\boldsymbol{\theta}_2}(\hat{\boldsymbol{\theta}}_{1,2})\big)-E\big(l_{k,\boldsymbol{\theta}_1}(\boldsymbol{\theta}_{1,2})\big)-E\big(l_{k+h,\boldsymbol{\theta}_2}(\boldsymbol{\theta}_{1,2})\big)
\bigg]\nonumber\\
&=\bigg[\frac{1}{h}\sum_{k=\tau_j^0-h+1}^{\tau_j^0}\big(l_{k,\boldsymbol{\theta}_1}(\hat{\boldsymbol{\theta}}_1)-(l_{k,\boldsymbol{\theta}_1}(\boldsymbol{\theta}_1)\big)\bigg]
+\bigg[\frac{1}{h}\sum_{k=\tau_j^0-h+1}^{\tau_j^0}l_{k,\boldsymbol{\theta}_1}(\boldsymbol{\theta}_1)-E(l_{k,\boldsymbol{\theta}_1}(\boldsymbol{\theta}_1))\bigg]\nonumber\\
&\quad+\bigg[\frac{1}{h}\sum_{k=\tau_j^0+1}^{\tau_j^0+h}\big(l_{k,\boldsymbol{\theta}_2}(\hat{\boldsymbol{\theta}}_2)-(l_{k,\boldsymbol{\theta}_2}(\boldsymbol{\theta}_2)\big)\bigg]
+\bigg[\frac{1}{h}\sum_{k=\tau_j^0+1}^{\tau_j^0+h}l_{k,\boldsymbol{\theta}_1}(\boldsymbol{\theta}_2)-E(l_{k,\boldsymbol{\theta}_2}(\boldsymbol{\theta}_2))\bigg]\nonumber\\
&\quad+\frac{1}{h}\sum_{k=\tau_j^0-h+1}^{\tau_j^0}\bigg(\big(l_{k,\boldsymbol{\theta}_1}(\hat{\boldsymbol{\theta}}_{1,2})+l_{k+h,\boldsymbol{\theta}_2}(\hat{\boldsymbol{\theta}}_{1,2})\big)-\big(l_{k,\boldsymbol{\theta}_1}(\boldsymbol{\theta}_{1,2})+l_{k+h,\boldsymbol{\theta}_2}(\boldsymbol{\theta}_{1,2})\big)\bigg)\nonumber\\
&\quad+\bigg[\frac{1}{h}\sum_{k=\tau_j^0-h+1}^{\tau_j^0}l_{k,\boldsymbol{\theta}_1}(\boldsymbol{\theta}_{1,2})-E\big(l_{k,\boldsymbol{\theta}_1}(\boldsymbol{\theta}_{1,2})\big)\bigg]+\bigg[\frac{1}{h}\sum_{k=\tau_j^0-h+1}^{\tau_j^0}l_{k,\boldsymbol{\theta}_2}(\boldsymbol{\theta}_{1,2})-E\big(l_{k+h,\boldsymbol{\theta}_2}(\boldsymbol{\theta}_{1,2})\big)\bigg]\,. \label{lem3expansion}
\end{align}
By Lemma \ref{lem2}, we have that for any $\epsilon>0$, there exists a constant $H_2>0$ such that for any $h>H_2$,
\begin{equation*}
\mathbb{P}\bigg(\bigg|\frac{1}{h}\sum_{k=\tau_j^0-h+1}^{\tau_j^0}\big(l_{k,\boldsymbol{\theta}_1}(\tau_j^0,\hat{\boldsymbol{\theta}}_1)-l_{k,\boldsymbol{\theta}_1}(\tau_j^0,\boldsymbol{\theta}_1)\big)\bigg|>\epsilon\bigg)\leq 6e^{-\frac{1}{4} h^{1/3} \epsilon^{2/3}}\,,
\end{equation*}
and
\begin{equation*}
\mathbb{P}\bigg(\bigg|\frac{1}{h}\sum_{k=\tau_j^0+1}^{\tau_j^0+h}\big(l_{k,\boldsymbol{\theta}_2}(\tau_j^0,\hat{\boldsymbol{\theta}}_2)-l_{k,\boldsymbol{\theta}_2}(\tau_j^0,\boldsymbol{\theta}_2)\big)\bigg|>\epsilon\bigg)\leq 6e^{-\frac{1}{4} h^{1/3} \epsilon^{2/3}}\,.
\end{equation*}

By the definition of $\boldsymbol{\theta_{1,2}}$, we have $\frac{\partial}{\partial \boldsymbol\theta}E\big(l_{k,\boldsymbol{\theta_1}}(\boldsymbol{\theta})+l_{k,\boldsymbol{\theta_2}}(\boldsymbol{\theta})\big)\big|_{\boldsymbol{\theta}=\boldsymbol{\theta_{1,2}}}=0$. Combining with Assumption \ref{asp2} gives $E\frac{\partial}{\partial \boldsymbol\theta}\big(l_{k,\boldsymbol{\theta_1}}(\boldsymbol{\theta_{1,2}})+l_{k,\boldsymbol{\theta_2}}(\boldsymbol{\theta_{1,2}})\big)=0$. Hence,  $\left\{\frac{\partial}{\partial \boldsymbol\theta}\big(l_{k,\boldsymbol{\theta_1}}(\boldsymbol{\theta}_{1,2})+l_{k+h,\boldsymbol{\theta_2}}(\boldsymbol{\theta}_{1,2})\big)\right\}$ is a martingale difference sequence. Using similar arguments in the proof of Lemma \ref{lem2}, we have that for any $\epsilon>0$, there exists a constant $H_3>0$ such that for any $h>H_3$,
\begin{equation*}
\mathbb{P}\bigg(\bigg|\frac{1}{h}\sum_{k=\tau_j^0-h+1}^{\tau_j^0}\big[\big(l_{k,\boldsymbol{\theta}_1}(\tau_j^0,\hat{\boldsymbol{\theta}}_{1,2})+l_{k+h,\boldsymbol{\theta}_2}(\tau_j^0,\hat{\boldsymbol{\theta}}_{1,2})\big)-\big(l_{k,\boldsymbol{\theta}_1}(\tau_j^0,\boldsymbol{\theta}_{1,2})+l_{k+h,\boldsymbol{\theta}_2}(\tau_j^0,\boldsymbol{\theta}_{1,2})\big)\big]\bigg|>\epsilon\bigg)\leq 6e^{-\frac{1}{4} h^{1/3} \epsilon^{2/3}}\,.
\end{equation*}
The remaining four terms in \eqref{lem3expansion} can be bounded by considering the large deviation probabilities of  $\frac{1}{h}\sum_{k=\tau_j^0-h+1}^{\tau_j^0}l_{k,\boldsymbol{\theta}_1}(\boldsymbol{\theta}_1)-E(l_{k,\boldsymbol{\theta}_1}(\boldsymbol{\theta}_1))$. 
For example, let $Y_{k}(\boldsymbol{\theta}_1)=l_{k,\boldsymbol{\theta}_1}(\boldsymbol{\theta}_1)-E(l_{k,\boldsymbol{\theta}_1}(\boldsymbol{\theta}_1))$. Note that $\{Y_{k}(\boldsymbol{\theta}_1)\}$ is a martingale difference sequence. By Assumption \ref{asp3} and Theorem 3.2 of \cite{lesigne2001large}, for any $\epsilon>0$, there exists a constant $H_4>0$ such that for any $h>H_4$,
\begin{equation*}
\mathbb{P}\bigg(\bigg|\frac{1}{h}\sum_{k=\tau_j^0-h+1}^{\tau_j^0}Y_k\bigg|\geq\epsilon\bigg)\leq e^{-\frac{1}{4} h^{1/3} \epsilon^{2/3}}\,.
\end{equation*}
Therefore, the large deviation probability for every term in \eqref{lem3expansion} has been obtained. The large deviation bound for $S_h(\tau_j^0)-g_j$ can thus be established and the proof is complete.
\end{proof}

\bibliographystyle{apalike}
\bibliography{literature}

\end{document}